\numberwithin{equation}{section}
\newtheorem{Theorem}{Theorem}[section]
\newtheorem{Definition}[Theorem]{Definition}
\newtheorem{Remark}[Theorem]{Remark}
\numberwithin{equation}{section}
\def \Vh0{\stackrel{\circ}{V}_h} \def\to{\rightarrow}
  \def\f{\frac}  
  \def\ss{\smallskip}
\newcommand{\lc}
{\mathrel{\raise2pt\hbox{${\mathop<\limits_{\raise1pt\hbox
{\mbox{$\sim$}}}}$}}}
\newcommand{\gc}
{\mathrel{\raise2pt\hbox{${\mathop>\limits_{\raise1pt\hbox{\mbox{$\sim$}}}}$}}}
\newcommand{\ec}
{\mathrel{\raise2pt\hbox{${\mathop=\limits_{\raise1pt\hbox{\mbox{$\sim$}}}}$}}}
\def\bb{\begin{equation}} \def\ee{\end{equation}}
\def\beqn{\begin{eqnarray}}  \def\eqn{\end{eqnarray}}
\def\beqnx{\begin{eqnarray*}} \def\eqnx{\end{eqnarray*}}
\def\bn{\begin{enumerate}} \def\en{\end{enumerate}}
\def\bd{\begin{description}} \def\ed{\end{description}}
\newenvironment{figurehere}
  {\def\@captype{figure}}
  {}
\begin{document}

\author{
Habib Ammari\thanks{Department of Mathematics,
ETH Z\"urich,
R\"amistrasse 101, CH-8092 Z\"urich, Switzerland (\tt habib.ammari@math.ethz.ch). }
\and
Yat Tin Chow\thanks{Department of Mathematics, University of California, Riverside, USA ({\tt yattinc@ucr.edu)}.}
\and
Keji Liu\thanks{The corresponding author, shanghai Key Laboratory of Financial Information Technology,  Institute of Scientific Computation and Financial Data Analysis, School of Mathematics, Shanghai University of Finance and Economics, 777 Guoding Road, Shanghai 200433, P.R. China
 ({\tt liu.keji@sufe.edu.cn; \;kjliu.ip@gmail.com}).}
}

\title{\bf Optimal Mesh Size for Inverse Medium Scattering Problems}

\date{}
\maketitle
\begin{abstract}
An optimal mesh size of the sampling region can help to reduce computational burden in practical applications. In this work, we investigate optimal choices of mesh sizes for the identifications of medium obstacles from either the far-field or near-field data in two and three dimensions. The results would have applications in the reconstruction process of inverse scattering problems. 
\end{abstract}

\ss
{{\bf MSC classification.}:
    35R30, %Inverse problems (undetermined coefficients, etc.) for PDE
    45Q05, 
    65R32. 
}

\ss
{\bf Keywords}: Optimal mesh size, inclusion reconstruction, inverse medium scattering problems.

\section{Introduction}
The direct and inverse medium scattering problems have been investigated extensively in the past few decades due to their important practical applications in geophysics, nondestructive testing, biological studies, evaluation and medicine, see, for instance, \cite{MSR, medical}. 
A large variety of numerical reconstruction methods have been developed and applied in practice for different kinds of models. For instance, some effective methods are developed for the homogeneous background model: the linear sampling or probing methods (LSM) \cite{CK}, the time-reversal multiple signal classification method (MUSIC) \cite{iakov1,iakov2,music}, the direct sampling method (DSM) \cite{Jin12}, the multilevel sampling method (MSM) \cite{LZ}, the contrast source inversion method (CSIM) \cite{BerBro}, etc. Moreover, several efficient algorithms are applied in various practical situations, e.g., in the stratified ocean waveguide model: the simple method (SM) \cite{L1}, extended direct sampling methods (EDSMs) \cite{CILZ, DSM11, DSM12, DSM13, DSM14, DSM15, L2, LXZ2, LXZ3},  the extended MUSIC method (EMUSIC) \cite{AIK, extended1},  the topological based imaging  functional \cite{topological}, etc. Furthermore, several efficient approaches are introduced in the two-layered medium model: the extended multilevel sampling method (EMSM) \cite{YL}, the extended linear sampling method (ELSM) \cite{DLLY}, etc.

Most reconstruction methods involve an indicator function (called also imaging functional) to be evaluated at each sampling point inside a sampling region for the recovery of the shape (and, if possible, the parameters therein) of the unknown inclusions. 
In practice, the sampling region is usually chosen to be large enough to contain all the unknown inclusions; then a considerably fine mesh is used in the sampling region to reconstruct the details of the objects. However, an extremely fine mesh would drastically increase the computational complexity in practice, especially in the three-dimensional case.  In fact, the computation complexity of most reconstruction methods (see, for instance, DSM, MUSIC, LSM, CSIM, etc.) is of $\mathcal{O}(N^2)$ in  two dimensions and $\mathcal{O}(N^3)$ in three dimensions, where $N$ is the number of sampling points along one direction.  Reconstruction methods such as MSM and the EMSM  has complexity of order of $N \log N$ in two dimensions and $N^2\log N$ in  three dimensions. Hence, they are still quite demanding if $N$ is large.
Therefore, an optimal mesh size for the sampling and the reconstruction is crucial to reduce the computation complexity of these algorithms and at the same time to reconstruct details of the inclusions in practical applications. 
Moreover, several iterative methods (i.e., EMSM, CSIM, etc.) apply certain cut-off values as a stopping criterion of the method.  Such a choice of stopping criterion may heavily depend on the reconstruction quality under a coarser mesh and a subjective choice of the cutoff value; which sometimes causes the reconstruction algorithm to entirely miss some small inclusions.  Choosing an optimal mesh size can improve the quality of the reconstruction  and practically avoid the aforementioned issue. 

To the best of the authors' knowledge, although there are quite a lot of discussions of resolution analysis (e.g., Abbe-Rayleigh resolution limit, Sparrow's resolution limit, etc) in inverse scattering problems, e.g., \cite{resol1, resol2, FWA, GP}, a systematic choice of the optimal mesh size for numerical reconstruction algorithms in inverse scattering problems has not yet been addressed in the literature.  
We would like to remark that the resolution analysis aims at understanding the quality of an image given a fixed grid size; whereas here we aim at choosing an optimal mesh size to ensure certain quality of the reconstruction.
This paper is hence devoted to an optimal selection of the mesh size in reconstruction methods. It is worth emphasizing that, in this paper, instead of focusing on one particular reconstruction method, we are interested in an optimal choice of the mesh size in a numerical reconstruction scheme assuming a maximal resolving power.  We shall provide an optimal selection of mesh size for the identifications of unknown inclusions from both the far-field and the near-field data in two and three dimensions.

This paper is organized as follows. In section \ref{sec:1}, we state the forward problems that we focus on.  We then provide in section \ref{sec:2} an  optimal mesh size selection given far-field data; whereas in section \ref{sec:3}, we consider the mesh size problem when near-field data are available.  
In section \ref{sec:refine}, we describe how our optimal mesh size function may help to refine our mesh in order to resolve clustered inclusions. 
In section \ref{sec:numerics}, we show that our choice of mesh size is indeed optimal by comparing several reconstructions from a typical numerical method with different mesh sizes.
Finally, some concluding remarks are presented in section \ref{con}. The results of this paper can help minimizing the computation complexity of methods designed for solving inverse scattering problems.  A similar analysis can be performed for electromagnetic and elastic inverse wave scattering problems.

\section{Problem description} \label{sec:1}
In this section, we shall describe the direct and inverse scattering problems in the presence of inhomogeneous inclusions (inside a sampling domain $\Omega$; see Figure \ref{fig:demo0}) embedded in a homogeneous background medium $\mathbb{R}^M (M \geq 2)$.   Consider an inhomogeneous inclusion represented by a (variable) contrast $q \in L^{\infty} ( \mathbb{R}^M ) $ such that $\text{supp}(q) \subset \Omega$, i.e., it vanishes outside $\Omega$.

\subsection{Forward problem}

We are now ready to state the considered forward problems.  For numerical simulations of solutions of these forward problems, we refer the reader, for instance, to \cite{LZ, LXZ3}. 

\subsubsection{Plane wave illumination} \noindent Suppose that $u^i$ is an incident plane given by
\beqnx
u^i(x,d_y) := e^ {  i k  \, d_y \cdot x } \,,
\eqnx
where $d_y \in \mathbb{S}^{M}$ is the direction of incidence with $k$ being the wave number and $\mathbb{S}^{M}$ being the unit sphere in $\mathbb{R}^{M}$. Then, $u^i$ satisfies the Helmholtz equation
\beqnx
\Delta u^i+k^2 u^i = 0\quad \text{for}\quad x\in\mathbb{R}^M \,.
\eqnx
We now consider the total field $u$ as the solution to the following equation,
\beqn
\Delta u+k^2\Big(1+q(x)\Big)u = 0\quad \text{for}\quad x\in\mathbb{R}^M \,,
\label{scattering1}
\eqn
subject to the outgoing Sommerfeld radiation condition:
\beqn
   \f{\partial u^s}{\partial r} - i k u^s = o\Big(\frac{1}{r^{(M-1)/2}}\Big) \quad \text{as}\quad r\to\infty,
    \label{sommerfield}
\eqn
which holds uniformly in all directions $x/|x|$, where $r=|x|$ and  $u^s := u - u^i$ is the scattered field.

From \eqref{scattering1}, due to the presence of inclusions in $\Omega$, the Lippmann-Schwinger representation formula of the total field $u$ yields
\beqn
u^s(x, d_y)= u(x,d_y)- u^i(x,d_y) = k^2\int_\Omega q(z)G(x,z)u(z,d_y) \, dz,\quad d_y\in\mathbb{S}^{M}\,,
\label{compute1}
\eqn
where $G(x,z) , x,z \in \,\mathbb{R}^M \text{ and } x \neq z,$ is the Green function associated with the homogeneous background,
\beqn
G(x,z)= C_{M} k^{\frac{2-M}{2}} |x-z|^{\frac{2-M}{2}} H^{(1)}_{\frac{M-2}{2}}(k |x-z|) \,.
\eqn
Here,  $H_\nu^{(1)}$ is the Hankel function  of the  first kind of order $\nu$ and $C_M = -i (2 \pi)^{\frac{2-M}{2}} /4$. In particular, we have
\beqn
G(x,z)= \left\{
\begin{aligned}
&\frac{i}{4}H^{(1)}_0(k|x-z|) &\text{for}\quad M=2,\\[2mm]
&\frac{e^{ik|x-z|}}{4\pi |x-z|} &\text{for}\quad M=3.
\end{aligned}
\right.
\eqn
%Hence, using the  of $u$, it follows that
%\beqn
%u^s(x,d_y)=k^2\int_\Omega q(z)G(x,z)u(z,d_y) \, dz,\quad x\in\mathbb{R}^M\;\;\text{and}\;\; d_y\in\mathbb{S}^{M}\,.
%\label{us1}
%\eqn

%The geometrical illustration is exhibited in Figure \ref{fig:demo0}.

\begin{figurehere}
 %\vskip -0.2truecm
 \hfill{}\includegraphics[clip,width=0.55\textwidth]{./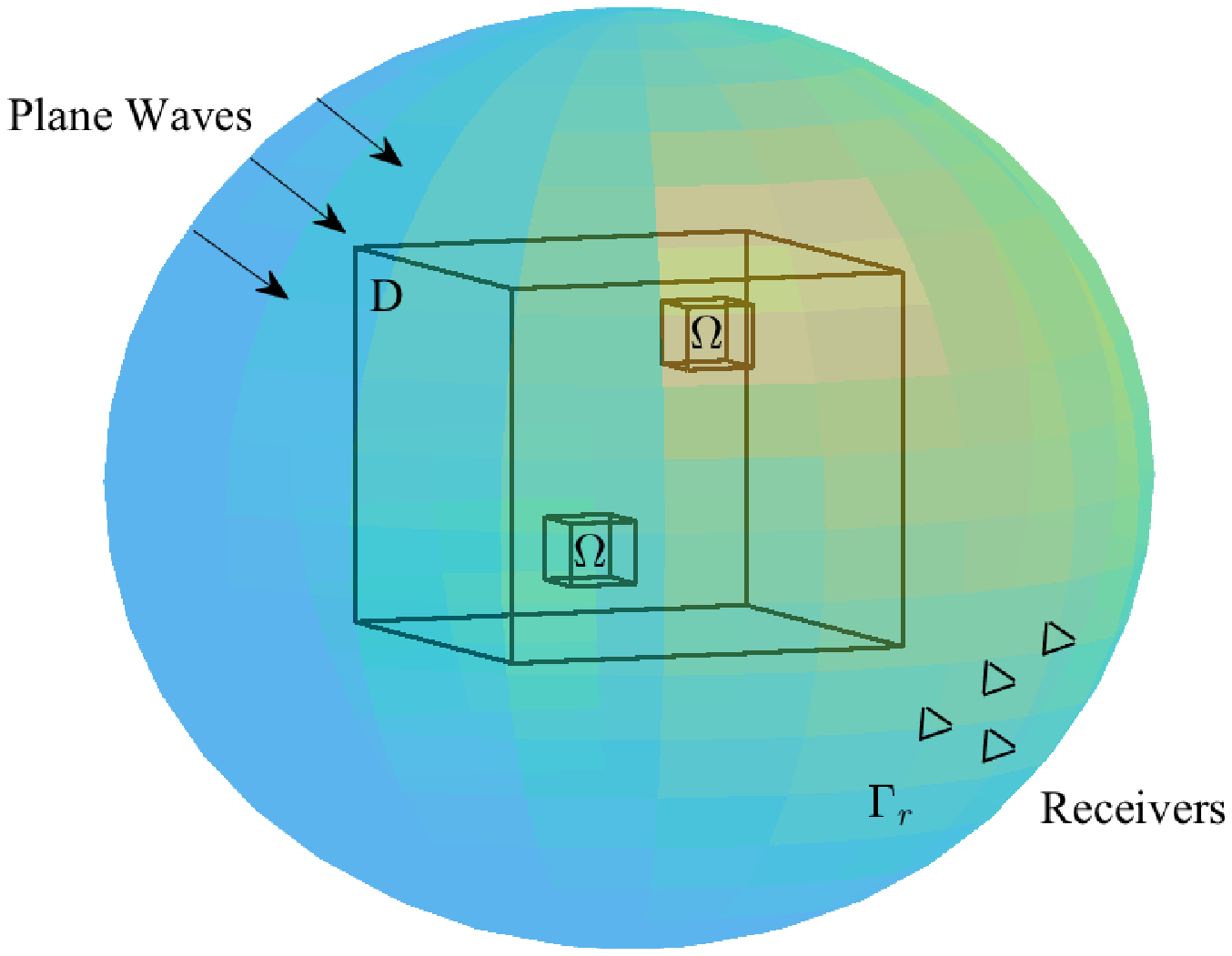}\hfill{}
 \vskip -1truecm
 \caption{\label{fig:demo0}  \small{\emph{Plane wave illumination.}}}
 \end{figurehere}

\subsubsection{Point source illumination} \noindent On the other hand, suppose that $u^i$ is generated from a point source $ y \in \Gamma_s$, where $\Gamma_s$ is a surface. Then, 
\beqnx
u^i(x,y) := G(x,y)= C_{M} k^{\frac{2-M}{2}} |x-z|^{\frac{2-M}{2}} H^{(1)}_{\frac{M-2}{2}}(|x-y|) \,,
\eqnx
which now satisfies 
\beqnx
\Delta u^i+k^2 u^i = \delta_y  \quad \text{for}\quad x\in\mathbb{R}^M \,.
\eqnx
Consider the total field $u$ as the solution to the following equation:
\beqn
\Delta u+k^2\Big(1+q(x)\Big)u = \delta_y  \quad \text{for}\quad x\in\mathbb{R}^M \,.
\label{scattering2}
\eqn
with $u^s := u - u^i$ satisfying the outgoing Sommerfeld radiation 
condition \eqref{sommerfield}.

From \eqref{scattering2}, due to the presence of inclusions in $\Omega$, the total field $u$ satisfies the following equation:
\beqn
u(x,y)=u^i(x,y)+k^2\int_\Omega q(z)G(x,z)u(z,y) \, d z,\quad x\in\mathbb{R}^M\;\;\text{and}\;\; y\in\Gamma_s\,.
\label{compute2}
\eqn
Because the corresponding scattered field $u^s$ is measured by the receivers on the surface $\Gamma_r$, it has the following representation:
 \beqn
u^s(x,y)=k^2\int_\Omega q(z)G(x,z)u(z,y) \, d z,\quad x\in\mathbb{R}^M\;\;\text{and}\;\; y\in\Gamma_s\,.
\label{us2}
\eqn
We refer to Figure \ref{fig:demo} for a schematic configuration of the inverse medium scattering problem.
We may again consider measurements $u(x,y)$ along a surface $x \in \Gamma_r$ where $y \in \Gamma_s$.

\begin{figurehere}
 \vskip 0.5truecm
 \hfill{}\includegraphics[clip,width=0.45\textwidth]{./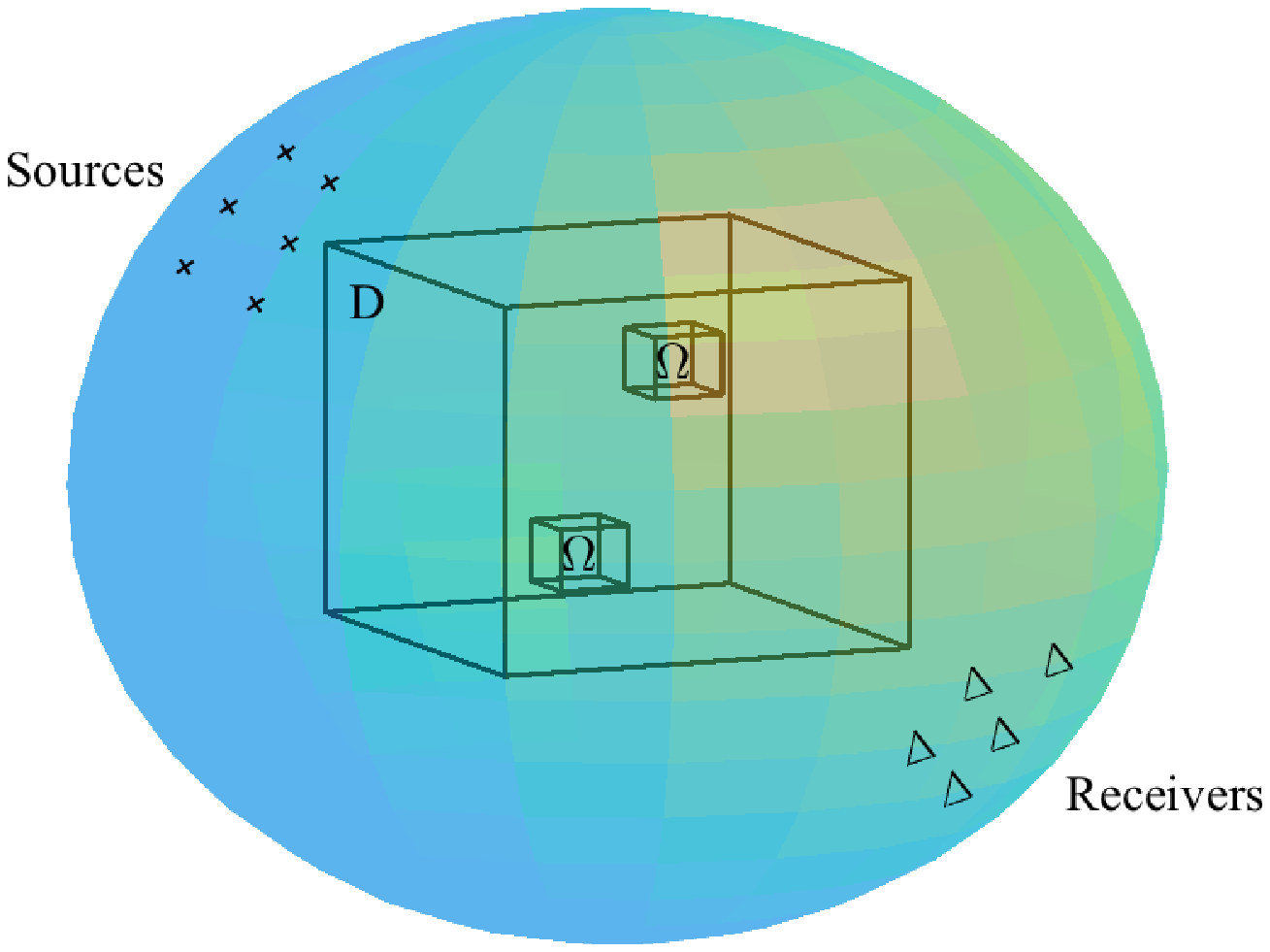}\hfill{}
 \vskip -0.2truecm
 \caption{\label{fig:demo}  \small{\emph{Point source illumination.}}}
 \end{figurehere}

\subsection{Inverse problem} \label{example_haha}

The inverse problem of our interest is to recover the physical features of the inclusions inside $\Omega$ from knowledge of the scattered field $u^s$.  

For {plane wave illuminations}, we consider the following problem:  given the measurements of $u^s(x,d_y)$ along a surface $x \in \Gamma_r$, where the distance $\text{dist} ( \Gamma_r, \Omega)  \gg 1$, and $d_y\in\mathbb{S}^{M}$, where $u$ satisfies \eqref{compute1}, we aim at recovering $q(x), x \in \Omega$.

In the case of {point source illuminations}, we consider the following problem:  given the measurements of $u^s(x,y)$ along a surface $x \in \Gamma_r$ and $y\in \Gamma_s $, where $u$ satisfies \eqref{compute2}, we aim at recovering $q(x), x \in \Omega$.

%\subsubsection{Examples of numerical methods and their relationship with sampling mesh size} \label{example_haha}

Note that there are many numerical methods addressing the number of required directions of incidence, see, for instance, \cite{BerBro, music, CK,  LXZ3, LXZ2}.
Since our work does not focus on any particular type of reconstruction methods, but aims at discussing how to choose the mesh size in general, we do not advocate any particular choice of numerical reconstruction methods. Nevertheless, in this subsection, for the sake of completeness, we state two examples of indicator functions in order to illustrate our main ideas.  In what follows, we provide a brief description of two types of indicator functions: index functions provided in the multilevel sampling method (MSM) \cite{L1, L2, LZ}, and the direct sampling method (DSM), see, for instance, \cite{Jin12, L1, LXZ3}.  Both methods work either with plane wave or point source illuminations; nonetheless, for the sake of simplicity, we only consider the case of plane wave illuminations. \\

\noindent \textbf{Example 1:} 
We would like to briefly describe the index function shown in the MSM.  
Given a choice of sampling points $x \in \Omega$.
Consider an incident direction $d_y \in \mathbb{S}^{M-1}$. We first introduce the contrast source function $\phi$, which is defined as follows,
\[\phi(x,d_y):= k^2 q(x) \, u(x,d_y) \quad \text{for}\;\;x\in \Omega.\]
The scattered field $u^s$ can thus be expressed into the following compact form
\[u^s(x_r,d_y)=\mathcal{P}_r\phi \,(x_r,d_y)  \quad \text{for}\;\;x_r\in \Gamma_r,\]
where the integral operator $\mathcal{P}_r$ is given, for $x_r \in \Gamma_r$, by
\[\mathcal{P}_r\varphi\,(x_r,d_y):=\int_\Omega G(x,x_r)\phi(x,d_y) dx.\]
The MSM aims at approximating the contrast source function at sampling points $x \in \Omega$ given by a mesh with multiple choices of $d_y$ via a backpropagation method; and then refine the choices of mesh sizes in order to obtain iteratively another set of $x \in \Omega$.
A backpropagation $\phi_b(\cdot, d_y)$, which is defined as follows, is employed to provide an (initial) approximation of $\phi(x,d_y)$:
\[
\phi_b : =\frac{\big\lVert \mathcal{P}_r^* u^s  \big\rVert_{L^2(\Omega)}^2}{\big\lVert  \mathcal{P}_r\,\mathcal{P}_r^* u^s  \big\rVert_{L^2(\Gamma_r)}^2}\mathcal{P}_r^*u^s.
\]
Here, $\mathcal{P}_r^*$ is known as the backpropagation operator which is represented as
\[\mathcal{P}_r^*\,\phi(x; d_{y}):=\int_{\Gamma_r}\overline{G(\xi;x)}\phi(\xi;d_y) \, d \sigma_\xi \quad \text{for}\;\; x\in \Omega.\]
In practice, the expression of contrast source function is numerically obtained by
\begin{equation}\label{eq:varphi}
 \tilde{\phi}_{b,n} (x) :=\frac{\big\lVert \mathcal{P}_r^* u^s(\cdot,  d_{y,n})  \big\rVert_{L^2(\Omega)}^2}{\big\lVert  \mathcal{P}_r\,\mathcal{P}_r^* u^s(\cdot,  d_{y,n})  \big\rVert_{L^2(\Gamma_r)}^2}\mathcal{P}_r^*u^s(x,  d_{y,n})  \quad \text{for}\;\; x\in \Omega,
 \end{equation}
for a finite number of incidences $d_{y,n} \in \mathbb{S}^{M-1}$, where $n=1,2\cdots,N$. With this in hand, we now define an index function or indicator function $\eta(x)$, which aims at numerically approximating $k^2 q(x)$ from the measured values of $\{ u^s(\cdot ,d_{y,n}) \}_{n=1}^N $, as the least-square solution to the system 
\begin{equation}
 \tilde{\phi}_{b,n} = \eta(x) \, u(x,d_{y_n}), 
 \end{equation}
for $n=1,2\cdots,N$. Here, the points $ x\in \Omega$ are the chosen sampling points in $\Omega$, and $\eta(x)$ can be explicitly given by
\begin{equation}\label{eq:eta}
\eta(x) :=\Re\left\{\frac{\sum\limits_{n=1}^N  \tilde{\phi}_{b,n} (x) \overline{\big(u^i(x, d_{y,n})+\mathcal{P}_\Omega  \tilde{\phi}_{b,n} \big)}}{\sum\limits_{n=1}^N\Big|u^i(x, d_{y,n})+\mathcal{P}_\Omega  \tilde{\phi}_{b,n}\Big|^2}\right\}\quad \text{for}\;\; x\in\Omega,
\end{equation}
where $\Re\{\cdot\}$ denotes the real part of a complex number, $ \tilde{\phi}_{b,n}$ are defined in \eqref{eq:varphi}, and the integral operator $\mathcal{P}_\Omega$ is defined, for all $x \in \Omega$, by
\[\mathcal{P}_\Omega\phi\,(x, d_y):=\int_\Omega G(\xi,x)\phi(\xi,d_y) d\xi.\]
Now the algorithm chooses a certain threshold of cutoff to numerically identify the support of the index function or indicator function $\eta$, and then the choices of $x \in \Omega$ are re-sampled, and the whole process is preformed again and iterated until a fine reconstruction profile is achieved. \\

\noindent \textbf{Example 2:} 
Another example of indicator functions is provided by the DSM.  For a given incidence $d_y \in \mathbb{S}^M$, the index function is defined by
\begin{equation}\label{eqn:indexfcn}
I(x, d_y):=\frac{\Big| \langle u^s(\cdot, d_y),G(\cdot,x) \rangle_{L^2(\Gamma_r)}\Big|}{||u^s(\cdot, d_y)||_{L^2(\Gamma_r)}||G(\cdot,x)||_{L^2(\Gamma_r)}}, \,\quad \forall\,x\in \Omega\,,
\end{equation}
where the inner product $ \langle \cdot, \cdot \rangle_{L^2(\Gamma_r)}$ is given by
\[ \langle u^s(x_r, d_y),G(x_r,x) \rangle_{L^2(\Gamma_r)} : =\int_{\Gamma_r}u^s(x_r, d_y) \overline{G(x_r,x)}ds\,\]
and
\[||u^s(\cdot, d_y)||_{L^2(\Gamma_r)}=\bigg(\int_{\Gamma_r}\Big|u^s(x_r,d_y)\Big|^2 \, d \sigma_{x_r} \bigg)^{\frac{1}{2}},\quad ||G(\cdot,x)||_{L^2(\Gamma_r)}=\bigg(\int_{\Gamma_r}\Big|G(x_r,x)\Big|^2 \, d \sigma_{x_r} \bigg)^{\frac{1}{2}}.\]
In order to fully utilize measurements from multiple measurement events, i.e., with incidences $d_{y,n} \in \mathbb{S}^{M}, n = 1,...,N$, we define an index function $I(x)$ to combine different values of $I(x, d_{y,n})$ in the following way:
\[I(x)=\max_{n=1,..., N}\Big\{I(x, d_{y,n})\Big\}, \quad \forall x\in \Omega.\]
Here, $I_{m}$ means the index function for the $m$th data set.
 It has been discussed theorectically and illustrated numerically in \cite{LXZ3} that the function in \eqref{eqn:indexfcn} has 
a relative large value when a sampling point $x$ is close to an inhomogeneous inclusion in $\Omega$ and decays rapidly as $x$ moves away from any of the inclusions. \\

We notice that in the two aforementioned examples, the choices of sampling points $x$ places a crucial row in constructing the indicator functions (either $\eta(x)$ or $I(x)$ in the previous two examples), and appropriate choices of the mesh size (or the density of sampling points) are of  importance for insuring the best possible quality of the reconstruction procedure.

\section{Optimal mesh size with far-field data under plane wave illumination} \label{sec:2}

In this section, we shall consider the optimal choice of the mesh size for the reconstruction of inclusions inside $\Omega$ from the far-field data.
We consider the case when $\Gamma_r = R \,  \mathbb{S}^{M-1}$ where $R \gg 1$. In this case, instead of having the scattered field $u^s$ as measurement, we may assume that we have obtained our measurement as the far-field pattern $u_\infty$ given by
\beqn 
u_\infty (d_x,d_y) = \lim_{|x| \rightarrow \infty} |x |^{\frac{M-1}{2}} \exp^{-i k |x|}  u_s( |x| d_x, d_y ),
\eqn
where $d_x$ and $d_y$ belong to the unit sphere $\mathbb{S}^{M-1}$.

Hence, we assume that the measurements $u_\infty (d_x,d_y)$, where $d_x, d_y \in \mathbb{S}^{M-1}$,  are performed.   For small wave number $k$, the Born approximation shows that the far-field pattern can be expressed as
\beqn\label{eq:far-field}
u_\infty (d_x,d_y) = \tilde{C}_{k,M} k^2 \int_\Omega q(z)e^{ik(d_x-d_y)\cdot z}dz + O (k^{4} |C_{k,M} |^2),
\eqn
where there error term is in $L^2$, and $$\tilde{C}_{k,M} = \frac{-i}{\sqrt{8 \pi}} \left( \frac{k}{2 \pi} \right)^{\frac{M-2}{2}} \exp \left( - \frac{(M-1)}{4} \pi i \right)$$ is a constant depending on the space dimension $M$ and  $k$.
We have the following result from the Nyquist-Shannon sampling theorem. 
\begin{Theorem}
Let $k$ be fixed and small.
Denote by $\mathfrak{F}(q)(\xi)$ the Fourier transform of $q(y)$:
\beqn
\mathfrak{F}(q)(\xi):=\int_{\Omega}q(z)e^{i \xi\cdot z}dz\quad \text{for} \quad \xi \in \mathbb{R}^M\,.
\eqn
Suppose that $\mathfrak{F}(q)(\xi) = 0 $ for all  $|\xi| > 2 k$. Then, the reconstruction of $q$ from $u_\infty (d_x,d_y)$, where $d_x, d_y \in \mathbb{S}^{M-1}$,  is with an error of order $O(k^2 |\tilde{C}_{k,M} |)$.  The reconstruction can be performed with a choice of the mesh size $h <  \f{\lambda}{2}$, where $\lambda :=2\pi/k$ is the operating wavelength.
\end{Theorem}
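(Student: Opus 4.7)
The plan is to read the theorem as a direct corollary of the Born formula together with the multidimensional Nyquist-Shannon sampling theorem. First I would rewrite \eqref{eq:far-field} as
\[
u_\infty(d_x,d_y) \;=\; \tilde{C}_{k,M}\,k^{2}\,\mathfrak{F}(q)\bigl(k(d_x-d_y)\bigr) \;+\; O\!\bigl(k^{4}|\tilde{C}_{k,M}|^{2}\bigr),
\]
so that each far-field measurement is, up to an explicit constant and an $L^{2}$ Born residual, a point evaluation of $\mathfrak{F}(q)$ at the vector $\xi=k(d_x-d_y)$. Dividing by $\tilde{C}_{k,M}k^{2}$, every measurement yields $\mathfrak{F}(q)(\xi)$ with an error of order $O(k^{2}|\tilde{C}_{k,M}|)$.

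Next I would identify which frequencies $\xi$ are accessible. As $(d_x,d_y)$ ranges over $\mathbb{S}^{M-1}\times\mathbb{S}^{M-1}$, the difference $d_x-d_y$ sweeps out the closed ball of radius $2$ in $\mathbb{R}^{M}$ (a quick geometric check: given any $\eta$ with $|\eta|\le 2$, choose any unit vector $d_y$ orthogonal to $\eta$ and then $d_x$ so that $d_x-d_y=\eta$ — this is elementary in dimensions $M\ge 2$). Consequently $\xi=k(d_x-d_y)$ fills out the closed ball $\overline{B(0,2k)}$, and the far-field data determine $\mathfrak{F}(q)$ on precisely this ball.

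Now I invoke the bandlimited hypothesis. Since $\mathfrak{F}(q)(\xi)=0$ for $|\xi|>2k$, the measurements actually determine $\mathfrak{F}(q)$ on all of $\mathbb{R}^{M}$, again up to the Born residual. By the multidimensional Nyquist-Shannon sampling theorem applied to the bandlimited function $q$, this is enough to reconstruct $q$ from its samples on any Cartesian mesh with spacing not exceeding the Nyquist bound associated to the wavenumber cut-off $2k$, i.e.\ any $h<\lambda/2$ where $\lambda=2\pi/k$; the $L^{2}$ error in the reconstructed $q$ inherits the $O(k^{2}|\tilde{C}_{k,M}|)$ bound from the Born residual through Parseval, since the inversion is restricted to a bounded frequency ball and the cardinal-series interpolation is a bounded operation on $L^{2}$.

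The main obstacles I anticipate are, first, making precise the jump from "$\mathfrak{F}(q)$ known on $\overline{B(0,2k)}$ with an $L^{2}$ error" to "$q$ recovered on a Cartesian mesh with a controlled $L^{2}$ error": this requires invoking a bandlimited sampling formula that is stable against perturbations in the frequency data, which is standard but must be stated explicitly; and second, keeping track of the exact constant in the sampling-theorem threshold under the Fourier convention $\mathfrak{F}(q)(\xi)=\int q(z)e^{i\xi\cdot z}\,dz$, so as to end up with the cut-off $h<\lambda/2$ stated in the theorem rather than a different numerical factor.
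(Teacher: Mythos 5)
Your proposal follows essentially the same route as the paper: divide the Born far-field formula by $\tilde{C}_{k,M}k^{2}$ to read off $\mathfrak{F}(q)(\xi)$ for $\xi = k(d_x-d_y)$ on the ball $|\xi|\le 2k$ with an $O(k^{2}|\tilde{C}_{k,M}|)$ error, use the bandlimited hypothesis to extend to all of $\mathbb{R}^{M}$, invert the Fourier transform, and invoke Nyquist--Shannon for the threshold $h<\lambda/2$; your treatment is in fact slightly more explicit than the paper's about stability of the inversion. One small slip in your parenthetical geometric check: if $d_y\perp\eta$ and $d_x=d_y+\eta$ then $|d_x|^{2}=1+|\eta|^{2}\neq 1$; the correct construction is $d_{x,y}=\pm\eta/2+w$ with $w\perp\eta$ and $|w|^{2}=1-|\eta|^{2}/4$, which does show that $d_x-d_y$ sweeps the closed ball of radius $2$.
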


\begin{proof}
It is direct to observe that
\beqn
\mathfrak{F}(q)(\xi) =  \frac{1}{\tilde{C}_{k,M} k^2}  u_\infty (d_x,d_y) + \mathcal{O} (k^{2} |\tilde{C}_{k,M}|) , 
\eqn
where $\xi:= k ( d_x - d_y)  \in\{\xi:|\xi|\leq 2 k\}$. Hence, from $u_\infty (d_x,d_y), \, d_x, d_y \in \mathbb{S}^{M-1}$, we obtain the values of $\mathfrak{F}(q)(\xi)$ for $|\xi|\leq 2 k$ up to an error of the order $\mathcal{O} (k^{2} |\tilde{C}_{k,M}|)$ in $L^2$. Since $\mathfrak{F}(q)(\xi) = 0 $ for all  $|\xi| > 2 k$, we have
$\mathfrak{F}(q)(\xi)$  up to an error of the order $\mathcal{O} (k^{2} |\tilde{C}_{k,M}|)$ in $L^2$.
Applying the inverse Fourier transform, we obtain $q$ up to an error of the order $\mathcal{O} (k^{2} |\tilde{C}_{k,M}|)$ in $L^2$.  An application of the Nyquist-Shannon sampling theorem yields the last statement in the theorem with a choice of $h < \f{\lambda}{2}$.
\end{proof}
Notice that the above result shows that we may maximize our choice of the mesh size by taking $h = \f{\lambda}{2} - \epsilon$ for a small $\epsilon$ in order to minimize the computation complexity.

In order to recover the contrast function $q(x)$, we should discretize \eqref{eq:far-field} into the following form:
\[u_\infty\approx \gamma k^2\sum_{j=1}^N s_j \,q(z_j)e^{ik(d_x-d_y)\cdot z_j}.\]
Here,  the optimal $N$ is $\big\lceil 4|\Omega|/\lambda^2 \big\rceil$ when $M = 2$, where $\lceil x\rceil$ means the largest integer that is less than or equal to $x$, $s_j$ denotes the $j$-th grid point in $\Omega$, and $|\Omega|$ represents the area (volume) of $\Omega$. 

%Actually, a uniform discretization may not be the best since the outer layer contributes more to the data. Hence, we may use fine mesh size, say $h<\lambda/2$ for the outer layers while $h\sim\lambda/2$ for the inner parts.

\section{Optimal mesh size with near-field data under point source illumination}  \label{sec:3}
In this section, optimal mesh size for contrast reconstruction from near-field data will be discussed.
Based on the first term of the Neumann series solution to the Lippmann-Schwinger integral equation which is known as the Born approximation, the scattered field $u^s$ is approximated by
\beqn
u^s_B(x,y)=k^2\int_{\Omega} q(z)G(x,z)u^i(y,z) \, d z + O(k^4),\quad x\in\Gamma_r\;\;\text{and}\;\; y\in\Gamma_s\,.
\eqn
Furthermore, the incident field $u^i$ in this case is provided by the Green function. Thus, the estimated scattered field $u^s_B$ can be written into the following multi-static response (MSR):
\beqn
[\text{MSR}(q)](x,y)  := \frac{1}{k^2} u^s_B(x,y)  =\int_{\Omega} q(z)G(x,z)G(z,y) \, d z + O(k^2) ,\quad x\in\Gamma_r\;\;\text{and}\;\; y\in\Gamma_s\,.
\eqn

In practice, we again discretize the sampling domain $D$ into $N$ sub-regions and denote the center of each sub-region  by $z_j$ for $j=1,2,\cdots, N$. Based on this discretization, we have the following MSR approximation operator $A$ up to a normalization constant $\hat{C}_{k,M}$ depending on $k$ and $M$, which is the dominant part of the near-field data,
\beqn{\label{A}}
A :=  
\sum \limits_{j=1}^N q(z_j)  |z_j-x|^{\frac{2-M}{2}}  |z_j-y|^{\frac{2-M}{2}}  H^{(1)}_{\frac{M-2}{2}}(k|z_j-x|) H^{(1)}_{\frac{M-2}{2}}(k|z_j -y|)   \quad \text{for} \quad x\in\Gamma_r\;\;\text{and}\;\; y\in\Gamma_s\,,
\eqn
where $z_j \in \Omega$.
%where $q(x) = \sum \limits_{j=1}^N q(z_j) \delta_{z_j}$.

Let us consider for simplicity the case when $\Gamma_r = \Gamma_s.$
In order to investigate the optimal mesh size for the reconstruction process, we need to determine the number of observable singular values for the MSR operator $A$ when $z_j$ are close to each other.

For this purpose, we first define a conjugation map on $L^2(\Gamma)$ as follows,
\beqnx
C: L^2(\Gamma) \rightarrow L^2(\Gamma) \quad f \mapsto \bar{f} \,.
\eqnx
%We shall also write the $L^2(\Gamma)$ inner product on $L^2(\Gamma)$ as
%\beqn
%\langle f, g \rangle_{\Gamma} = \int_{\Gamma} \overline{f(y)} g(y)  \, d \sigma_y \,, \quad  \forall f,g \in L^2(\Gamma)\,.
%\eqn
%where $d \sigma_y $ is either the surface measure along $\Gamma$ if $\Gamma$ is a surface or the counting measure if $\Gamma$ is a set of points.  For sake of exposition, we shall also define the following bilinear form on $L^2(\Gamma)$ as
%\beqn
%[ f, g ]_{\Gamma} = \langle C f, g \rangle_{\Gamma} \,, \quad  \forall f,g \in L^2(\Gamma)\,.
%\eqn
%For simplicity, we shall sometimes also denote $\langle \cdot , \cdot \rangle_{\Gamma}$ and $[ \cdot, \cdot ]_{\Gamma}$ as $\langle \cdot , \cdot \rangle$ and $[ \cdot, \cdot ]$ respectively when the domain of integration $\Gamma$ is known in the context. For notational sake, we also define the following two dual maps:
%\beqn
%(\cdot)^*:  L^2(\Gamma) \rightarrow [L^2(\Gamma)]^* \,, \quad f \mapsto \langle f, \cdot \rangle \,, \\
%(\cdot)^T:  L^2(\Gamma) \rightarrow [L^2(\Gamma)]^* \,, \quad f \mapsto [ f, \cdot ] \,. 
%\eqn
%and likewise for operators.  
Now, from the fact that the operator $A$ is compact and $C$-complex symmetric, namely, $A^* = C A C$. We have the following Danciger's variational principle for the singular values $\{ \sigma_i \}$ of $A$ \cite{Garcia},
\beqn
\sigma_n = \min \limits_{\text{codim}_{\mathbb{R}} V = n} \max \limits_{\substack{p \in V \\ || p || = 1}} {\Re}[A p, p] \,,
\label{varvar}
\eqn
where $[Ap, p]=p^TAp$ and $\sigma_0 \geq \sigma_1 \geq ... \geq 0$.

Before we state the following main result, we denote the locations of receivers and sources by $\Gamma:=\Gamma_r\cup\Gamma_s$.
Firstly, we consider the simplest case when $N=1$. Because $q$ is a constant, we investigate the following operator $A_1$ which is the key part of the operator $A$:
\beqn
A_1 := a  |z-x|^{\frac{2-M}{2}}   |z-y|^{\frac{2-M}{2}}  H^{(0)}_{\frac{M-2}{2}}(k|z -x|) H^{(1)}_{\frac{M-2}{2}}(k|z -y|) = w \otimes w^T\quad \text{for}\quad x,y\in\Gamma,
\eqn
where $a= || |z-x|^{\frac{2-M}{2}}  H^{(1)}_{\frac{M-2}{2}}(k|z - x |)  ||^2_{L^2(\Gamma)}$, $\otimes$ means the Kronecker product and $$w :=  |z-x|^{\frac{2-M}{2}}  H^{(1)}_{\frac{M-2}{2}}(k|z -x|) \bigg/ \Big\| |z-x|^{\frac{2-M}{2}}   H^{(1)}_{\frac{M-2}{2}}(k|z - x |)  \Big\|_{L^2(\Gamma)}$$ is a unit vector.
It is easy to see that
\beqn
{\Re}[A_1 p, p] = {\Re} (p^T A_1 p ) = {\Re} ( w^T p )^2
\begin{cases}
= 0 & \text{ if }\; p \perp_{\mathbb{R}} \overline{w} \,, \\[2mm]
\neq 0 & \text{ otherwise},
\end{cases}
\eqn
and that ${\Re}[A_1 p, p] \leq 1$. We infer from Danciger's principle \cite{Dan} that the singular values of $A_1$ are as follows: $\sigma_0 = 1$ and $\sigma_i=0\, (i\geq1)$.

Secondly, we investigate the case when $N=2$.
With the help of  \eqref{A}, the main part of the MSR matrix is of the form
\beqn
A_2 := b_1w_1 \otimes w_1^T + b_2w_2 \otimes w_2^T , \label{A2}
\eqn
where $b_j= ||   |z_j-x|^{\frac{2-M}{2}}  H^{(1)}_{\frac{M-2}{2}}(k|z_j - x |)  ||^2_{L^2(\Gamma)}$, and $$w_j:= |z_j-x|^{\frac{2-M}{2}}    H^{(1)}_{\frac{M-2}{2}}(k|z_j -x|) \bigg/ \Big\|  |z_j-x|^{\frac{2-M}{2}}    H^{(1)}_{\frac{M-2}{2}}(k|z_j - x |)  \Big\|_{L^2(\Gamma)},$$ for $j = 1,2$, are unit vectors.
Similarly, we are able to directly obtain that 
\beqn
{\Re}[A_2 p, p] = {\Re} (p^T A_2 p ) \begin{cases}
= 0 & \text{ if } p \perp_{\mathbb{R}} \text{span}_{\mathbb{R}}\{ \overline{w_1}, \overline{w_2} \}\,, \\[2mm]
\neq 0 & \text{  otherwise}\,.
\end{cases}
\label{var2}
\eqn
Now, let us define $\{ \overline{h_1} , \overline{h_2} \} := \Big\{ \frac{\overline{w_1  + w_2}}{\|w_1  + w_2\|_{L^2(\Gamma)}} , \frac{\overline{w_1 - w_2}}{\|w_1 - w_2\|_{L^2(\Gamma)}} \Big\}$. Then, we have
\beqn
\| h_1 \|_{L^2(\Gamma)} = \| h_2 \|_{L^2(\Gamma)} = 1 \,, \quad {\Re} \langle h_1, h_2 \rangle_\Gamma  = 0 \,,
\label{var22}
\eqn
and that $\text{span}_{\mathbb{R}}\{ \overline{h_1}, \overline{h_2} \} = \text{span}_{\mathbb{R}}\{ \overline{w_1}, \overline{w_2} \}$.  Hence, the operator $A_2$ can be written as,
\beqn
A_2 =\frac{1}{2}\|w_1  + w_2\|_{L^2(\Gamma)}^2 \,  h_1 \otimes h_1^T + \frac{1}{2} \|w_1  - w_2\|_{L^2(\Gamma)}^2 \, h_2 \otimes h_2^T.
\label{var222}
\eqn
Considering \eqref{var2}-\eqref{var222}, Danciger's principle yields that the singular values of $A_2$ satisfy
\beqn
\sigma_0 &=& \frac{1}{2}\max \Big\{ \|w_1  + w_2\|_{L^2(\Gamma)}^2 , \|w_1  - w_2\|_{L^2(\Gamma)}^2 \Big\} = 1 + |\Re\langle w_1,w_2\rangle|\, , \\[2mm]
\sigma_1 &=& \frac{1}{2}\min \Big\{ \|w_1  + w_2\|_{L^2(\Gamma)}^2 , \|w_1  - w_2\|_{L^2(\Gamma)}^2 \Big\} = 1 - |\Re\langle w_1,w_2\rangle|\, , \\[2mm]
\sigma_l &=& 0,  \quad \quad  \forall\, l >1,
\eqn
where $\langle w_1,w_2\rangle=\int_\Gamma \overline{w_1}w_2d\sigma$. The above results can be summarized in the following theorem.
\begin{Theorem}
Suppose that the operator $A$ can be represented as follows,
\beqn
A := w_1 \otimes w_1^T + w_2 \otimes w_2^T \,,
\eqn
where $w_j$ are unit vectors in $L^2(\Gamma)$ for $j= 1,2$. We have the following expression for the singular values of $A$:
\beqn
\sigma_0 = 1 + |{\Re}\langle w_1,w_2\rangle|\, ,
\sigma_1 = 1 - |{\Re}\langle w_1,w_2\rangle|\, ,
\sigma_2 = \sigma_3 =\cdots= 0,
\eqn
and the quotient $\alpha$ between the two non-zero singular values is expressed as,
\beqn
\alpha := \frac{\sigma_1}{\sigma_0} =  \frac{1 - |{\Re}\langle w_1,w_2\rangle|}{1 + |{\Re}\langle w_1,w_2\rangle|}\,.
\eqn
\end{Theorem}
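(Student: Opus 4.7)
The plan is to abstract the $A_2$ computation that appears in the paragraphs preceding the theorem, now formulated for arbitrary unit vectors $w_1, w_2 \in L^2(\Gamma)$. First I would observe that the range of $A$ is contained in $\text{span}_{\mathbb{C}}\{w_1, w_2\}$, which has complex dimension at most two, so that $A$ has rank at most two. This immediately gives $\sigma_l = 0$ for all $l \geq 2$ and reduces the task to identifying the two remaining singular values.

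Next, since $A$ is compact and $C$-complex symmetric ($A^* = CAC$), Danciger's variational principle \eqref{varvar} is available, expressing the singular values through optimization of the real-bilinear form ${\Re}[Ap, p] = {\Re}(p^T A p)$. To diagonalize $A$ on its range I would introduce the pair
\[ h_1 := \frac{w_1 + w_2}{\|w_1 + w_2\|_{L^2(\Gamma)}}, \qquad h_2 := \frac{w_1 - w_2}{\|w_1 - w_2\|_{L^2(\Gamma)}}, \]
and verify that $\|h_j\|_{L^2(\Gamma)} = 1$ and ${\Re}\langle h_1, h_2\rangle_\Gamma = 0$, which uses only $\|w_1\| = \|w_2\| = 1$. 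A direct rearrangement then yields
\[ A = \tfrac{1}{2}\|w_1 + w_2\|_{L^2(\Gamma)}^2 \, h_1 \otimes h_1^T + \tfrac{1}{2}\|w_1 - w_2\|_{L^2(\Gamma)}^2 \, h_2 \otimes h_2^T, \]
and polarization gives $\tfrac{1}{2}\|w_1 \pm w_2\|_{L^2(\Gamma)}^2 = 1 \pm {\Re}\langle w_1, w_2\rangle$. Danciger's principle applied to this diagonal representation then picks out $\sigma_0$ as the maximum and $\sigma_1$ as the minimum of the two coefficients, which equal $1 + |{\Re}\langle w_1, w_2\rangle|$ and $1 - |{\Re}\langle w_1, w_2\rangle|$ respectively; the formula for $\alpha = \sigma_1/\sigma_0$ is then immediate.

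The main technical point to watch is the distinction between the Hermitian inner product $\langle\cdot,\cdot\rangle_\Gamma$ and the complex-bilinear pairing $[\cdot,\cdot]$ in which Danciger's principle is phrased. In particular, real-orthogonality ${\Re}\langle h_1, h_2\rangle_\Gamma = 0$, rather than full complex orthogonality, is what is needed to apply the minimax over real-codimension subspaces; and the absolute value in $|{\Re}\langle w_1, w_2\rangle|$ emerges precisely by taking $\max$ or $\min$ of $\tfrac{1}{2}\|w_1 \pm w_2\|_{L^2(\Gamma)}^2$ according to the sign of ${\Re}\langle w_1, w_2\rangle$.
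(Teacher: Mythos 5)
Your argument is correct and follows essentially the same route as the paper: the paper likewise passes to the real-orthonormal pair $h_1, h_2$ built from the normalized sum and difference of $w_1, w_2$, rewrites $A$ as $\tfrac{1}{2}\|w_1+w_2\|^2\, h_1\otimes h_1^T + \tfrac{1}{2}\|w_1-w_2\|^2\, h_2\otimes h_2^T$, and reads off the singular values from Danciger's principle, with the rank-two observation killing $\sigma_l$ for $l\geq 2$. Your explicit attention to the distinction between the Hermitian inner product and the bilinear pairing, and to the polarization identity producing $1\pm\Re\langle w_1,w_2\rangle$, matches the paper's computation.
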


We have noticed that the ratio $\alpha$ represents the relative observability of the second singular value with respect to the first one. 
Therefore, it quantifies the ability to distinguish between two inhomogeneities from the measurements in the form of the MSR matrix.  Consequently, we introduce the following definition of the $\alpha$-distinguishable with respect to $\Gamma$, for $0 < \alpha < 1$.
\begin{Definition}
Two point scatterers at locations $z_1$ and $z_2$ with equal weights % and weights $q(z_1)$ and $q(z_2)$ 
are $\alpha$-distinguishable with respect to $\Gamma_r$ if the following inequality is satisfied:
\beqn
1 - \alpha \geq  \frac{1 - |{\Re}\langle {w}_1,{w}_2\rangle|}{1 + |{\Re}\langle {w}_1, {w}_2\rangle|}, 
\label{alpha1}
\eqn
where  $$w_j:= \left( k |z_j-x| \right)^{\frac{2-M}{2}}  H^{(1)}_{\frac{M-2}{2}}(k|z_j -x|) \bigg/ \Big\|  \left( k |z_j-x| \right)^{\frac{2-M}{2}}  H^{(1)}_{\frac{M-2}{2}}(k|z_j - x |)  \Big\|_{L^2(\Gamma)}$$  with $j=1,2$ and $\forall{z}_j\in D$, $x\in\Gamma_r$.
\end{Definition}

Without loss of generality, we consider two point scatterers located at $z_1 = z$ and $z_2 = z + \varepsilon v$, where $v$ is a unit vector and $\varepsilon$ is a distance parameter which can be viewed as the mesh size. We acquire the interval of $\varepsilon$ when the two point scatterers are $\alpha$-distinguishable with respect to $\Gamma$. 
From the series expansion of $(1 + a)^{- \beta}$ for all $\beta \neq 0 $ w.r.t. $a$, we derive that if $ \varepsilon < |z-x| $, then
\beqnx
|z + \varepsilon v - x |^{- \beta} = \sum_{r=0}^\infty \sum_{s=0}^r \frac{\Gamma(\frac{\beta}{2} + r)}{\Gamma(\frac{\beta}{2}) s! (r-s) ! }  (-1)^r  \varepsilon^{r + s} \cos^{r-s} (\theta_{z-x,v} )   |z - x |^{ - \beta - r - s},
\eqnx
where $\theta_{z-x,v}$ denotes the relative angle between $z-x$ and $v$ and that the series converges absolutely and uniformly when $ \varepsilon < (1-\gamma) \min \{ 1, |z-x| \} $,  for some $0 < \gamma < 1$.
Together with Graf's fromula \cite{Watson}, we have if $ \varepsilon <  (1-\gamma) \, \min\{ 1, \text{dist}(\Gamma, z) \} \, $, then the following expansion holds:
\beqn
& & {\Re} \left(\int_{\Gamma}  |z-x|^{\frac{2-M}{2}}    |z + \varepsilon v -x|^{\frac{2-M}{2}}  H^{(2)}_{\frac{M-2}{2}}(k|z -x|) H^{(1)}_{\frac{M-2}{2}}(k|z + \varepsilon v -x|) \, d \sigma_x \right) \notag  \\[2mm]
&= &
\begin{cases}
\sum \limits_{n\in \mathbb{Z}} p_{n} J_n(k \varepsilon )  & \text{ if } M = 2, \\[3mm]
\sum \limits_{n\in \mathbb{Z}}  \sum \limits_{r \in \mathbb{N}} \sum \limits_{s=0}^r   \, p_{nrs} \,  J_n(k \varepsilon ) \, \varepsilon^{r+s} & \text{ if } M > 2,
\end{cases}
\label{seriesA}
\eqn
where $p_n$ and $p_{n,r,s}$ are explicitly given by
{\small
\beqn
p_n &:=& {\Re} \left(\int_{\Gamma}  H^{(2)}_{0}(k|z -x|) H^{(1)}_{n}(k|z -x|) e^{- i n \theta_{z-x,v}} \, d \sigma_x \right)\,
\label{pn} \\[2mm]
p_{nrs} &:= & (-1)^r   \frac{\Gamma(\frac{M-2}{4} + r)}{\Gamma(\frac{M-2}{4}) s! (r-s) ! } \,  {\Re} \bigg(\int_{\Gamma}  |z-x|^{2-M - r - s}   H^{(2)}_{\frac{M-2}{2}}(k|z -x|) \notag \\[2mm]
      &&\cdot H^{(1)}_{\frac{M-2}{2} + n}(k|z -x|) \cos^{r-s} (\theta_{z-x,v} )  e^{- i n \theta_{z-x,v}} \, d \sigma_x \bigg)\,\label{pnrs}
\eqn
}with $\theta_{z-x,v}$ denoting the relative angle between $z-x$ and $v$. Likewise, we have
\beqn
& &\int_{\Gamma}  |z + \varepsilon v -x|^{2-M}  H^{(2)}_{\frac{M-2}{2}}(k|z + \varepsilon v -x|) H^{(1)}_{\frac{M-2}{2}}(k|z + \varepsilon v -x|) \, d \sigma_x   \notag\\[2mm]
&= &
\begin{cases}
\sum \limits_{n,m\in \mathbb{Z}} h_{nm} J_n(k \varepsilon )  J_m(k \varepsilon ) & \text{ if } M = 2 ,\\[3mm]
\sum \limits_{n,m\in \mathbb{Z}} \sum \limits_{r \in \mathbb{N}} \sum \limits_{s=0}^r  h_{nmrs}  \, J_n(k \varepsilon )  J_m(k \varepsilon ) \, \varepsilon^{r+s} & \text{ if } M > 2,
\end{cases}
\label{seriesB}
\eqn
where $h_{nm}$ and $h_{nmrs}$ are given by
{\small
\beqn
h_{nm} &:=& {\Re} \left(\int_{\Gamma} H^{(2)}_{m}(k|z -x|) H^{(1)}_{n}(k|z -x|) e^{- i (n-m)\theta_{z-x,v}} \, d \sigma_x \right)\,,
\label{hnm} \\[2mm]
h_{nmrs}  &:=&(-1)^r
\frac{\Gamma(\frac{M-2}{2} + r)}{\Gamma(\frac{M-2}{2}) s! (r-s) ! }     
 {\Re} \bigg(\int_{\Gamma} |z - x |^{ 2 - M - r - s} H^{(2)}_{\frac{M-2}{2}+m}(k|z -x|)\notag\\[2mm]
 &&\cdot H^{(1)}_{\frac{M-2}{2} + n}(k|z -x|)  \cos^{r-s} (\theta_{z-x,v} )  e^{- i (n-m)\theta_{z-x,v}} \, d \sigma_x \bigg)\,.\label{hnmrs}
\eqn
}Assume that $0 <R_0 \leq \text{dist}(z,\Gamma) \leq R_1$ for some $R_0, R_1 \in \mathbb{R}$, and consider the asymptotic property of $J_m$ \cite{handbook}. We can infer that both the series \eqref{seriesA} and \eqref{seriesB} converge absolutely.
If we further impose the restriction that $\frac{k \, \varepsilon}{2} \leq 1$, then from the following well-known Taylor series of the Bessel functions \cite{Watson},
\beqn
J_n(k \varepsilon ) = (\text{sgn}(n))^{n} \sum_{l \in \mathbb{N}} \frac{(-1)^{l}}{l! (l+|n|)!} \left(\frac{k \,  \varepsilon}{2}\right)^{2 l + |n|} \,,
\eqn
we immediately obtain the following expansion of the inner product:
{\small
\begin{eqnarray}
& & {\Re} \left(\int_{\Gamma}  |z-x|^{\frac{2-M}{2}}    |z + \varepsilon v -x|^{\frac{2-M}{2}}  H^{(2)}_{\frac{M-2}{2}}(k|z -x|) H^{(1)}_{\frac{M-2}{2}}(k|z + \varepsilon v -x|) \, d \sigma_x \right)
\notag \\[2mm]
&=& 
\begin{cases}
\sum \limits_{l \in \mathbb{N}} \sum \limits_{n\in \mathbb{Z}} (\text{sgn}(n))^{n} p_n \frac{(-1)^{l}}{l! (l+|n|)!} \left(\frac{k \,  \varepsilon}{2}\right)^{2 l + |n|}  \;\text{ for } M = 2 ,\\[3mm]
\,
\sum \limits_{l \in \mathbb{N}}  \sum \limits_{n\in \mathbb{Z}}  \sum \limits_{r \in \mathbb{N}} \sum \limits_{s=0}^r   \, (- \text{sgn}(n))^{n} p_{nrs}  \frac{(-1)^{l}}{l! (l+|n|)!} \left(\frac{k \,  \varepsilon}{2}\right)^{2 l + |n|} \varepsilon^{r+s} \;  \text{ for } M > 2,
\end{cases}
\label{series1}
\\[2mm]
& &\int_{\Gamma}  |z + \varepsilon v -x|^{2-M}  H^{(2)}_{\frac{M-2}{2}}(k|z + \varepsilon v -x|) H^{(1)}_{\frac{M-2}{2}}(k|z + \varepsilon v -x|) \, d \sigma_x  \notag \\[2mm]
&=&
\begin{cases}
 \sum \limits_{l, p \in \mathbb{N}} \sum \limits_{n,m \in \mathbb{Z}}  (\text{sgn}(n))^{n}  (\text{sgn}(m))^{m} h_{nm} \frac{(-1)^{l+p}}{l!p! (l+|n|)! (p+|m|)!} \left(\frac{k \,  \varepsilon}{2}\right)^{2 (l+p) + |n| + |m| } \; \text{ for } M = 2 , \\[3mm]
 \sum \limits_{l, p \in \mathbb{N}} \sum \limits_{n,m \in \mathbb{Z}} \sum \limits_{r \in \mathbb{N}} \sum \limits_{s=0}^r   (\text{sgn}(n))^{n}  (\text{sgn}(m))^{m} h_{nmrs} \frac{(-1)^{l+p}}{l!p! (l+|n|)! (p+|m|)!} \left(\frac{k \,  \varepsilon}{2}\right)^{2 (l+p) + |n| + |m| } \varepsilon^{r+s} \; \text{ for } M > 2,
\end{cases}
\label{series2}
\end{eqnarray}
}where the two series at the right-hand side converge absolutely for $\frac{k \, \varepsilon}{2} \leq 1$ and  $ \varepsilon <   (1-\gamma)\, \min \{ 1, \text{dist}(\Gamma, z) \}$.

\subsection{$\alpha$-distinguishability when $M=2$}

For simplicity, let us first focus on the case when $M = 2$. We obtain the following estimate,
{\small
\beqn
\left| {\Re} \left(\int_{\Gamma} H^{(2)}_{0}(k|z -x|) H^{(1)}_{0}(k|z + \varepsilon v -x|) \, d \sigma_x \right)- p_0 + \frac{k \,  \varepsilon}{2} ( p_1 - p_{-1})  \right| &\leq C_1 \frac{k ^2\,  \varepsilon^2}{4}, \quad \quad  \label{approx1}\\[2mm]
\left| \int_{\Gamma} H^{(2)}_{0}(k|z + \varepsilon v -x|) H^{(1)}_{0}(k|z + \varepsilon v -x|) \, d \sigma_x -
h_{00} - \frac{k \,  \varepsilon}{2} \sum_{|m|+|n|=1} (\text{sgn}(n))^{n}  (\text{sgn}(m))^{m}  h_{nm}  \right| &\leq C_2 \frac{k^2 \,  \varepsilon^2}{4},\quad \quad  \label{approx2}
\eqn}
where $p_0 = h_{00} =  || H^{(1)}_{0}(k|z - x |)  ||^2_{L^2(\Gamma)}$ and $C_1$ and $C_2$ are two finite constants having the following forms:
\beqn
C_1 &=& \sum_{l>0} \sum \limits_{n\in \mathbb{Z}}  \frac{|p_n|}{l! (l+n)!} + \sum \limits_{n>1}  \frac{|p_n|}{ n !}
\label{constant1} \,,\\[2mm]
C_2 &=& \sum_{l+p >0} \sum \limits_{m,n\in \mathbb{Z}}  \frac{|h_{nm}|}{l!p! (l+n)! (p+m)!} +\sum \limits_{m+n>1} \frac{|h_{nm}|}{ n! m!} \,
\label{constant2}
\eqn
if  $\frac{k \, \varepsilon}{2} \leq 1$ and  $ \varepsilon < (1-\gamma)\, \min \{ 1, \text{dist}(\Gamma, z) \} $.
From \eqref{approx1} and \eqref{approx2}, we have the following approximation:
\beqn
1 - \left| {\Re}\langle w_1,w_2\rangle \right|^2 = 1 - \frac{\left(p_0 - ( p_1 - p_{-1}) \frac{k \,  \varepsilon}{2} + \mathcal{O}\left(\frac{k^2 \,  \varepsilon^2}{4} \right) \right)^2}{ p_0 \left(p_0 + \frac{k \,  \varepsilon}{2}\sum\limits_{|m|+|n|=1}  (\text{sgn}(n))^{n}  (\text{sgn}(m))^{m}   h_{nm}  + \mathcal{O}\left(\frac{k^2 \,  \varepsilon^2}{4}\right)  \right) } \,.
\label{taylortaylor}
\eqn
We further assume that
\beqn
    \frac{k \,  \varepsilon}{2} \leq  \sqrt{ \frac{p_0^2 k_1^2 }{4 C_2^2} + \frac{p_0 (1 - \delta)}{C_2} } -  \frac{p_0 |k_1| }{2 C_2}
    \label{bounddelta}
\eqn
for some $ 0 < \delta < 1$, where 
\beqn
k_1= \frac{\sum_{|m|+|n|=1} (\text{sgn}(n))^{n}  (\text{sgn}(m))^{m}  h_{nm} }{p_0} \,. 
\label{k1}
\eqn
Then, we have
\beqn
\max \left\{ \frac{k \,  \varepsilon}{2}   \left| k_1 - \frac{C_2}{p_0} \frac{k \,  \varepsilon}{2}  \right|,  \frac{k \,  \varepsilon}{2}   \left| k_1  +  \frac{C_2}{p_0} \frac{k \,  \varepsilon}{2}  \right| \right\} \leq 1 - \delta,
\eqn
and thus, we can derive from the Taylor expansion and \eqref{taylortaylor} that
\beqn \label{estima}
\left| 1 - |{\Re}\langle w_1,w_2\rangle|^2 - \left( \frac{ 2 (p_1 - p_{-1})}{p_0} + k_1 \right)\frac{k \,  \varepsilon}{2}
\right| \leq C \frac{k^2 \,  \varepsilon^2}{4}
\,
\eqn
with the constant $C$ given by
{\small
\beqn
C = \frac{C_2}{p_0}  \left(1 + 2 \left|k_1 \right| \delta^{-1} \right) +  k_1^2 \delta^{-1} +  \frac{C_2^2}{p_0^2} \delta^{-1}
+ \frac{ 2 ( | p_1 | + |p_{-1}| ) | k_1| }{p_0}
+ \frac{ ( | p_1 | + |p_{-1}|  )^2 + 2 (p_0 +  | p_1 | + |p_{-1}|  )C_1 + C_1^2}{p_0^2}, \quad 
\label{constant3}
\eqn
}when $0<\delta<1$.
We can rewrite the condition in \eqref{alpha1} as follows,
\beqn
\frac{4  (1- \alpha) }{(2 - \alpha)^2}  \geq 1 - |{\Re}\langle {w}_1,{w}_2\rangle|^2.
\eqn
With the help of \eqref{estima}, the two point inclusions with the same magnitude % scatterers
are $\alpha$-distinguishable if the following inequality holds:
\beqn
\frac{4 (1- \alpha)}{(2 + \alpha)^2}  \geq  \left( \frac{ 2 ( p_1 - p_{-1})}{p_0} + k_1 \right)\frac{k \,  \varepsilon}{2} + C \left( \frac{k \,  \varepsilon}{2}\right)^{2} \,.
\eqn
This directly infers the following theorem.
Note that we may always assume that $k \varepsilon < 1/2$.

\begin{Theorem} \label{important_theorem}
When the dimension $M = 2$, consider a point $z$ such that $0 <R_0 \leq \text{dist}(z,\Gamma) \leq R_1$ for some $R_0, R_1 \in \mathbb{R}$, where $\Gamma = \Gamma_s = \Gamma_r$, $ \frac{k \,  \varepsilon}{2} \leq  \min \left\{ \frac{1}{4} , \sqrt{ \frac{p_0^2 k_1^2 }{4 C_2^2} + \frac{p_0 (1 - \delta)}{C_2} } -  \frac{p_0 |k_1| }{2 C_2} \right\}$ for some $0< \delta <1 $,  $ \varepsilon < (1-\gamma) \, \min \left\{ 1,  \text{dist}(\Gamma, z) \right \} $ for some $ 0< \gamma <1 $, $p_n$ are defined as in \eqref{pn}, $h_{nm}$ as in \eqref{hnm}, and let $C_1,C_2,C, k_1$ be stated in \eqref{constant1}, \eqref{constant2} \eqref{constant3} and \eqref{k1}, respectively.
The two point scatterers with equal weights located at $z$ and $z+ \varepsilon v$ are $\alpha$-distinguishable with respect to $\Gamma$ if the following inequality holds:
\beqn
 \frac{k \,  \varepsilon}{2}
\leq
- \left( \frac{ 2 p_1 - 2 p_{-1} + k_1 p_0 }{ 2 C p_0}  \right) + \sqrt{ \left( \frac{ 2 p_1 - 2 p_{-1} + k_1 p_0}{ 2 C p_0} \right)^2 +  \frac{16 (1- \alpha)}{ C (2 + \alpha)^2}  } \,.
\label{conclusion}
\eqn
\end{Theorem}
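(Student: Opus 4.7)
The plan is to convert the $\alpha$-distinguishability inequality \eqref{alpha1} into a quadratic inequality in the scalar $k\varepsilon/2$, and then simply solve that quadratic. Almost every ingredient is already assembled in the preceding expansions, so the proof reduces to (i) bookkeeping the tails of a double power series, (ii) expanding $|\Re\langle w_1,w_2\rangle|^2$ as a ratio to first order in $k\varepsilon/2$ with an explicit quadratic remainder, and (iii) an algebraic rearrangement.

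First I would invoke Graf's addition formula to re-expand $|z+\varepsilon v-x|^{(2-M)/2} H^{(1)}_{(M-2)/2}(k|z+\varepsilon v-x|)$ in Bessel-function building blocks centered at $z$, and then confirm absolute convergence of the double series \eqref{series1}--\eqref{series2} in the regime $k\varepsilon/2\le 1$ and $\varepsilon<(1-\gamma)\min\{1,\mathrm{dist}(\Gamma,z)\}$; this convergence follows from the standard Hankel-function asymptotics together with the factorial denominators in the Taylor series of $J_n$. Isolating the $\mathcal{O}(1)$ and $\mathcal{O}(k\varepsilon/2)$ terms of these series in the case $M=2$ and absorbing everything above into residuals controlled by the constants $C_1,\,C_2$ of \eqref{constant1}--\eqref{constant2}, I then obtain the two approximations \eqref{approx1}--\eqref{approx2} for the cross-inner-product and for its normalizing denominator $\|w_1\|_{L^2(\Gamma)}\|w_2\|_{L^2(\Gamma)}$.

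Second, I would form $|\Re\langle w_1,w_2\rangle|^2$ as the ratio of the square of \eqref{approx1} to the product \eqref{approx2}. Under hypothesis \eqref{bounddelta} the denominator stays above $\delta\, p_0^{2}$, which allows a geometric expansion of $(b+y)^{-1}$ that is uniform in $\varepsilon$; collecting the linear term (whose coefficient involves $p_{\pm 1}$ and $k_1$) and bounding the quadratic tail yields the clean estimate \eqref{estima} with the explicit constant $C$ of \eqref{constant3}. The auxiliary requirement $k\varepsilon/2\le 1/4$ is used here precisely to keep the remainder of $(a+x)/(b+y)$ around $(x,y)=(0,0)$ summable with the stated constant. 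Finally I would translate \eqref{alpha1}: setting $s=|\Re\langle w_1,w_2\rangle|$, the elementary algebra $1-\alpha\ge(1-s)/(1+s)\Longleftrightarrow s(2-\alpha)\ge\alpha$ produces the upper bound on $1-s^2$ recorded just before the theorem; plugging in \eqref{estima} gives
\[ C\Bigl(\tfrac{k\varepsilon}{2}\Bigr)^{2} + \Bigl(\tfrac{2(p_1-p_{-1})}{p_0}+k_1\Bigr)\tfrac{k\varepsilon}{2} \;\le\; \tfrac{4(1-\alpha)}{(2+\alpha)^{2}}, \]
and the nonnegative root of this quadratic is exactly the right-hand side of \eqref{conclusion}.

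The main technical obstacle I anticipate is step two: one must produce a quantitative Taylor remainder for the rational function $(a+x)/(b+y)$ while keeping explicit track of $p_0,\,p_{\pm 1},\,k_1,\,C_1,\,C_2$ and verifying that \eqref{bounddelta} is precisely the smallness condition needed for the geometric expansion of the denominator to converge with remainder $\mathcal{O}((k\varepsilon/2)^{2})$ and with coefficient matching \eqref{constant3}. Once this is in place, steps one and three are, respectively, careful bookkeeping of already-displayed absolutely convergent series and the standard solution of a scalar quadratic inequality, both of which are routine.
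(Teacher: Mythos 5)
Your proposal follows essentially the same route as the paper: Graf's formula and the Bessel--Taylor expansions give \eqref{approx1}--\eqref{approx2}, the ratio defining $|\Re\langle w_1,w_2\rangle|^2$ is expanded under the smallness condition \eqref{bounddelta} to produce \eqref{estima} with the constant \eqref{constant3}, the condition \eqref{alpha1} is rewritten as an upper bound on $1-|\Re\langle w_1,w_2\rangle|^2$, and the resulting scalar quadratic in $k\varepsilon/2$ is solved. One bookkeeping caveat: solving $C t^{2}+Bt\le 4(1-\alpha)/(2+\alpha)^{2}$ places $4(1-\alpha)/\bigl(C(2+\alpha)^{2}\bigr)$ under the radical, whereas \eqref{conclusion} records $16(1-\alpha)/\bigl(C(2+\alpha)^{2}\bigr)$, so your claim that the nonnegative root is ``exactly'' the right-hand side of \eqref{conclusion} glosses over a factor of $4$ that appears to be a slip in the paper's own algebra rather than a defect in your argument.
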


Unfortunately, both the conditions and conclusions of the above theorem are too complicated to be practical for choosing an optimal mesh size. In order to obtain an optimal mesh size for practical use, we provide  approximations for the coefficients in the above inequalities: we estimate $C_1$ and $C_2$  by
\beqn
C_1 \approx p_0
\, , \quad
C_2 \approx 2 p_0 \, .
\label{constantapprox}
\eqn
By considering \eqref{series1} and  \eqref{series2}, and further imposing that $\delta = 1/2$, the following approximation of the constant $C$ holds:
{\small
\beqn
C \approx  13  + 8 \left|k_1 \right|  +  2 k_1^2 
+ \frac{ ( | p_1 | + |p_{-1}|  )^2 + 2 ( | p_1 | + |p_{-1}|  ) (1 + |k_1| )p_0 }{p_0^2}.
\label{constantapprox2}
\eqn}
The condition \eqref{bounddelta} with $k \varepsilon /2 < 1/4$ can also be simplified to be the above approximate condition
\beqn
    \frac{k \,  \varepsilon}{2} <   \min \left\{ \frac{1}{4}, \sqrt{ \frac{k_1^2 }{16} + \frac{1}{4} } -  \frac{|k_1| }{4} \right\}.
\eqn
Combining the above estimations of the constants $C_1,C_2,$ and $C$ with \eqref{conclusion}, we obtain the following approximate description of an \textbf{anisotropic} optimal density  at any point $z$ and any direction $v$ for contrast reconstruction from the near-field data in terms of the $\alpha$-distinguishably,
\beqn
h_{z,v}(\alpha) :=  \frac{2}{k}\min \left\{ \frac{1}{4} , \sqrt{ \frac{k_1^2 }{16} + \frac{1}{4} } -  \frac{|k_1| }{4}  ,
    \left( \frac{ 2 p_1 - 2 p_{-1} + k_1 p_0 }{ 2 C p_0}  \right) - \sqrt{ \left( \frac{ 2 p_1 - 2 p_{-1} + k_1 p_0}{ 2 C p_0} \right)^2 + \frac{16 (1- \alpha)}{ C (2 + \alpha)^2}  } 
    \right\}.
\label{finalfunction} 
\eqn
Note that  $p_0, p_1, p_{-1}, k_1,$ and $C$ depend on $z$, $v,$ and $\Gamma$.

\begin{Remark}
A few remarks are in order: 
\begin{enumerate}
\item[(i)]
In practice, the measurements are taken over a finite number of points on $\Gamma = \Gamma_r = \Gamma_s$ and hence, the integrals in \eqref{pn},  \eqref{pnrs}, \eqref{hnm}, and \eqref{hnmrs} can be evaluated numerically as a sum over the points.  
\item[(ii)]
In practice, small regions with fine details are usually anticipated to be recovered with high resolution. Thus, a uniform discretization in the whole sampling domain may not be the best.  
Hence, we may apply adaptivity, and use the optimal mesh size in \eqref{finalfunction} in the region with the requirements of high resolution and a coarse mesh to the other regions. 
\item[(iii)]
To get the interpretation of the optimal mesh size correct, actually it means if you refine further than the mesh size given, it will not improve the resolution.  Hence, if the object is far away, then it is almost identical to using far-field reconstruction, and we do not need to refine better than $\lambda/2$.
\item[(iv)]
A similar analysis can be performed in the general case when $\Gamma_r$ and $\Gamma_s$ may not coincide, but we skip the analysis for the sake of simplicity.
\end{enumerate}
\end{Remark}

\subsection{$\alpha$-distinguishability when $M > 2$}

Similarly, when $M > 2$, we have,
{\footnotesize
\beqn
\left| {\Re} \left(\int_{\Gamma} H^{(2)}_{0}(k|z -x|) H^{(1)}_{0}(k|z + \varepsilon v -x|) \, d \sigma_x \right)- p_{000} + \frac{k \,  \varepsilon}{2}   \left(p_{1\,0\,0} - p_{-1\,0\,0}  - \frac{2 p_{0\,1\,0}}{k}  \right)  \,   \,  \right| &\leq C_{1,M} \frac{k ^2\,  \varepsilon^2}{4}, \quad \quad   \label{approx1M}\\[3mm]
\left| \int_{\Gamma} H^{(2)}_{0}(k|z + \varepsilon v -x|) H^{(1)}_{0}(k|z + \varepsilon v -x|) \, d \sigma_x -
h_{0000} + \frac{k \,  \varepsilon}{2}  \left(\sum_{|m|+|n|=1} (\text{sgn}(n))^{n}  (\text{sgn}(m))^{m}  h_{nm00} +  \frac{2 h_{0010} }{k} \right)  \right| &\leq C_{2,M} \frac{k^2 \,  \varepsilon^2}{4}, \quad \quad  \label{approx2M}
\eqn
}where $p_{000} = h_{0000} =  \Big\| | z-x |^{\frac{M-2}{2}} H^{(1)}_{\frac{M-2}{2}}(k|z - x |)  \Big\|^2_{L^2(\Gamma)}$ and $C_{1,M}$ and  $C_{2,M}$ are two constants given if  $\frac{k \, \varepsilon}{2} \leq 1$ and  $ \varepsilon < (1-\gamma)\, \min \{ 1, \text{dist}(\Gamma, z) \} $ by
{\footnotesize
\beqn
C_{1,M} &=& \sum \limits_{l>0}   \sum \limits_{n\in \mathbb{Z}}  \sum \limits_{r \in \mathbb{N}} \sum \limits_{s=0}^r    \frac{|p_{nrs}|}{l! (l+n)!} + \sum \limits_{n>1}   \sum \limits_{r \in \mathbb{N}} \sum \limits_{s=0}^r    \frac{|p_{nrs}|}{ n !}
\label{constant1M} \,,\\[3mm]
C_{2,M} &=& \sum_{l+p >0} \sum \limits_{m,n\in \mathbb{Z}}  \sum \limits_{r \in \mathbb{N}} \sum \limits_{s=0}^r     \frac{|h_{nmrs}|}{l!p! (l+n)! (p+m)!} +\sum \limits_{m+n>1}  \sum \limits_{r \in \mathbb{N}} \sum \limits_{s=0}^r    \frac{|h_{nmrs}|}{ n! m!} \,.
\label{constant2M}
\eqn
}Hence, a similar analysis can be performed as in the previous subsection provided that $ \varepsilon < (1-\gamma)\, \min \{ 1, \text{dist}(\Gamma, z) \} $ and 
\beqn
    \frac{k \,  \varepsilon}{2} \leq  \min\left\{1, \sqrt{ \frac{p_{000}^2 k_1^2 }{4 C_{2,M}^2} + \frac{p_{000} (1 - \delta)}{C_{2,M}} } -  \frac{p_{000} |k_{1,M}| }{2 C_{2,M}} \right\}
    \label{bounddeltaM}
\eqn
for some $ 0 < \delta < 1$, where 
\beqn
k_{1,M}=  \left(\sum_{|m|+|n|=1} (\text{sgn}(n))^{n}  (\text{sgn}(m))^{m}  h_{nm00} +  \frac{2 h_{0010} }{k} \right) \bigg/p_{000} \,.
\label{k1M}
\eqn
Then, by a Taylor expansion, we readily see that
\beqnx
\left| 1 - |{\Re}\langle w_1,w_2\rangle|^2 - \left( \frac{ 2  \left(p_{1\,0\,0} - p_{-1\,0\,0}  - \frac{2 p_{0\,1\,0}}{k}  \right)  }{p_{000}} + k_{1,M} \right)\frac{k \,  \varepsilon}{2}
\right| \leq C \frac{k^2 \,  \varepsilon^2}{4}
\,
\eqnx
with the constant $C_M$ being given by
{\footnotesize
\beqn
C_M &=& \frac{C_{2,M}}{p_{000}}  \left(1 + 2 \left|k_{1,M} \right| \delta^{-1} \right) +  k_{1,M}^2 \delta^{-1} +  \frac{C_{2,M}^2}{p_{000}^2} \delta^{-1}
+ \frac{ 2  \left(|p_{1\,0\,0}| + |p_{-1\,0\,0}|  - \frac{2 |p_{0\,1\,0}|}{k}  \right)  ) | k_{1,M}| }{p_{000}} \notag \\[2mm]
& &
+ \frac{  \left(|p_{1\,0\,0}| + |p_{-1\,0\,0}|  - \frac{2 |p_{0\,1\,0}|}{k}  \right) ^2 + 2 \left(p_{000} +  \left(|p_{1\,0\,0}| + |p_{-1\,0\,0}|  - \frac{2 |p_{0\,1\,0}|}{k}  \right)     \right) C_{1,M} + C_{1,M}^2}{p_{000}^2}, \quad 
\label{constant3M}
\eqn
}when $0<\delta<1$.
Hence, condition \eqref{alpha1} is fulfilled if 
\beqn
\frac{4 (1- \alpha)}{(2 + \alpha)^2}  \geq \left( \frac{ 2  \left(p_{1\,0\,0} - p_{-1\,0\,0}  - \frac{2 p_{0\,1\,0}}{k}  \right)  }{p_{000}} + k_{1,M} \right) \frac{k \,  \varepsilon}{2} + C_M \left( \frac{k \,  \varepsilon}{2}\right)^{2} \,.
\eqn
Likewise, adding the constraint $k \varepsilon < 1/4$, this directly infers the following theorem.
\begin{Theorem}
For $M > 2$, consider a point $z$ such that $0 <R_0 \leq \text{dist}(z,\Gamma) \leq R_1$ for some $R_0, R_1 \in \mathbb{R}$, where $\Gamma = \Gamma_s = \Gamma_r$,  and that $ \frac{k \,  \varepsilon}{2}$ satisfies \eqref{bounddeltaM} for some $0< \delta <1$ and $k \varepsilon < 1/4$,  $ \varepsilon < (1-\gamma) \, \min \left\{ 1,  \text{dist}(\Gamma, z) \right \} $ for some $0 < \gamma <1 $, where $p_{nrs}$ are defined as in \eqref{pnrs}, $h_{nmrs}$ as in \eqref{hnmrs}, and let $C_{1,M},C_{2,M},C_{M}, k_{1,M}$ be as  in \eqref{constant1M}, \eqref{constant2M} \eqref{constant3M}, and \eqref{k1M}, respectively.
The two point scatterers with equal weights located at $z$ and $z+ \varepsilon v$ are $\alpha$-distinguishable with respect to $\Gamma$ if the following inequality holds:
\beqn
 \frac{k \,  \varepsilon}{2}
\leq
\ \left( \frac{ 2  \left(p_{1\,0\,0} - p_{-1\,0\,0}  - \frac{2 p_{0\,1\,0}}{k}  \right)  }{p_{000}} + k_{1,M} \right)  - \sqrt{  \left( \frac{ 2  \left(p_{1\,0\,0} - p_{-1\,0\,0}  - \frac{2 p_{0\,1\,0}}{k}  \right)  }{p_{000}} + k_{1,M} \right)^2 +  \frac{16 (1- \alpha)}{ C_{M} (2 + \alpha)^2}  }   \,.
\label{conclusionM}
\eqn
\end{Theorem}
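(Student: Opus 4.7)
The plan is to mirror the proof of Theorem \ref{important_theorem} (the $M = 2$ case) essentially verbatim, with the book-keeping adapted to the richer series expansions \eqref{series1}--\eqref{series2}. Starting from the operator $A_2$ in \eqref{A2}, the singular-value analysis via Danciger's principle gives $\sigma_0$ and $\sigma_1$ in terms of $|\Re\langle w_1, w_2\rangle|$, so the $\alpha$-distinguishability condition \eqref{alpha1} can be rewritten, after clearing denominators, as the equivalent quadratic inequality
\beqnx
\frac{4(1-\alpha)}{(2+\alpha)^2} \;\geq\; 1 - |\Re\langle w_1, w_2\rangle|^2.
\eqnx
Thus the task reduces to estimating $1 - |\Re\langle w_1, w_2\rangle|^2$ from above in terms of $k\varepsilon/2$, and solving the resulting quadratic.

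First, I would expand both the numerator $\Re\bigl(\int_\Gamma H^{(2)}_{(M-2)/2} H^{(1)}_{(M-2)/2}\,d\sigma\bigr)$ and the normalization $\int_\Gamma |z+\varepsilon v - x|^{2-M} |H^{(1)}_{(M-2)/2}|^2 \,d\sigma$ by means of the absolutely convergent double series in \eqref{series1}--\eqref{series2}, yielding \eqref{approx1M} and \eqref{approx2M} with the constants $C_{1,M}, C_{2,M}$ from \eqref{constant1M}--\eqref{constant2M}. The new feature relative to $M=2$ is the presence of the $r+s=1$ term in the Taylor expansion of $|z+\varepsilon v - x|^{2-M}$, which injects the extra linear contribution $-2p_{010}/k$ into the numerator and the analogous $2h_{0010}/k$ term into the denominator; these are what produce the quantity $k_{1,M}$ in \eqref{k1M} and the shifted "effective dipole" $p_{100} - p_{-100} - 2p_{010}/k$ in \eqref{conclusionM}.

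Second, forming the ratio
\beqnx
1 - |\Re\langle w_1, w_2\rangle|^2 \;=\; 1 \;-\; \frac{\bigl(p_{000} - (p_{100} - p_{-100} - \tfrac{2p_{010}}{k})\tfrac{k\varepsilon}{2} + \mathcal O((k\varepsilon)^2)\bigr)^2}{p_{000}\bigl(p_{000} + k_{1,M}\, p_{000}\, \tfrac{k\varepsilon}{2} + \mathcal O((k\varepsilon)^2)\bigr)},
\eqnx
I would expand $1/(1+\text{small})$ as a geometric series. The hypothesis \eqref{bounddeltaM} is precisely tailored so that $\tfrac{k\varepsilon}{2}\bigl|k_{1,M} \pm \tfrac{C_{2,M}}{p_{000}}\tfrac{k\varepsilon}{2}\bigr| \leq 1 - \delta$, which validates the expansion and collects every higher-order remainder into a quadratic error whose coefficient is bounded by the constant $C_M$ displayed in \eqref{constant3M}. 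This produces
\beqnx
\Bigl|\,1 - |\Re\langle w_1, w_2\rangle|^2 \;-\; \Bigl(\tfrac{2(p_{100} - p_{-100} - \tfrac{2p_{010}}{k})}{p_{000}} + k_{1,M}\Bigr)\tfrac{k\varepsilon}{2}\,\Bigr| \;\leq\; C_M \,\tfrac{k^2\varepsilon^2}{4}.
\eqnx

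Third, inserting this estimate into the rewritten $\alpha$-distinguishability condition gives
\beqnx
\frac{4(1-\alpha)}{(2+\alpha)^2} \;\geq\; \Bigl(\tfrac{2(p_{100} - p_{-100} - \tfrac{2p_{010}}{k})}{p_{000}} + k_{1,M}\Bigr)\tfrac{k\varepsilon}{2} \;+\; C_M\Bigl(\tfrac{k\varepsilon}{2}\Bigr)^2,
\eqnx
a quadratic inequality in $k\varepsilon/2$; taking its positive root yields the stated bound \eqref{conclusionM}. I expect the main obstacle to be purely organizational rather than conceptual: verifying that every $\varepsilon^{r+s}$ factor produced by the expansion of $|z+\varepsilon v - x|^{2-M}$ for $r+s \geq 2$ can indeed be absorbed into the $C_M(k\varepsilon/2)^2$ term requires using $\varepsilon < (1-\gamma)\min\{1, \mathrm{dist}(\Gamma, z)\}$ to guarantee absolute convergence of the triple/quadruple series, and then carefully separating the unique $(n,r,s,l) = (\pm 1,0,0,0)$ and $(0,1,0,0)$ contributions that give the linear term from all the rest. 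Once this separation is done cleanly, the remainder of the argument is algebraic and identical in form to the $M=2$ case.
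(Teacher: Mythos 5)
Your proposal follows essentially the same route as the paper: the paper derives this theorem by repeating the $M=2$ argument with the expansions \eqref{approx1M}--\eqref{approx2M}, the condition \eqref{bounddeltaM} to justify the geometric-series/Taylor expansion of the ratio, the resulting linear-plus-quadratic bound on $1-|\Re\langle w_1,w_2\rangle|^2$ with constant $C_M$, and the positive root of the quadratic inequality obtained from $\frac{4(1-\alpha)}{(2+\alpha)^2}\geq 1-|\Re\langle w_1,w_2\rangle|^2$. Your identification of the extra $r+s=1$ terms (producing $-2p_{010}/k$ and $2h_{0010}/k$, hence $k_{1,M}$) as the only genuinely new ingredient matches the paper exactly, so the proposal is correct in the same sense and to the same level of rigor as the paper's own derivation.
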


In order to make the conditions stated in the above theorem practical, we assume that $\delta = 1/2$ and provide, by considering \eqref{series1} and  \eqref{series2},  approximations for the coefficients with
{\footnotesize
\beqnx
C_{1,M} &\approx& p_{000}
\, , \quad
C_{2,M} \approx 2 p_{000} \, , 
\quad \\[2mm]
C_M &\approx &
13  + 8\left|k_{1,M} \right| +  2 k_{1,M}^2 
+ \frac{  \left(|p_{1\,0\,0}| + |p_{-1\,0\,0}|  - \frac{2 |p_{0\,1\,0}|}{k}  \right) ^2 + 2 \left(|p_{1\,0\,0}| + |p_{-1\,0\,0}|  - \frac{2 |p_{0\,1\,0}|}{k}  \right) ( p_{000} + |k_{1,M} | ) }{p_{000}^2}. \quad 
\eqnx
}  With the above approximations of the constants $C_{1,M},C_{2,M},$ and $C_M$ and estimate \eqref{conclusionM}, we obtain an \textbf{anisotropic} optimal density at any point $z$ and at any direction $v$ for contrast reconstruction from the near-field data in terms of $\alpha$-distinguishably as follows:
{\small
\beqn
 h_{z,v,M}(\alpha) &:= & \frac{2}{k}\min \Bigg\{ \frac{1}{4}, \sqrt{ \frac{k_{1,M}^2 }{16} + \frac{1}{4} } -  \frac{|k_{1,M}| }{4} ,
  \left( \frac{ 2  \left(p_{1\,0\,0} - p_{-1\,0\,0}  - \frac{2 p_{0\,1\,0}}{k}  \right)  }{p_{000}} + k_{1,M} \right)  \notag \\[3mm] 
   &&    - \sqrt{  \left( \frac{ 2  \left(p_{1\,0\,0} - p_{-1\,0\,0}  - \frac{2 p_{0\,1\,0}}{k}  \right)  }{p_{000}} + k_{1,M} \right)^2 +  \frac{16 (1- \alpha)}{ C_{M} (2 + \alpha)^2}  }  
    \Bigg\}. \label{finalfunctionM} 
\eqn
}The remarks in the previous subsection are also in order in this subsection.

\subsubsection{Combining the optimal mesh size function along two opposite directions}

Practically, one may combine the optimal mesh-size for $v$ and $-v$ since they are refined in the same direction, and we may consider the following function instead,
$$\widetilde{h_{z,v}} (\alpha) := \min\big\{ h_{z,v}(\alpha) , h_{z,-v}(\alpha) \big\} \, ,$$
as well as
$$\widetilde{h_{z,v,M}} (\alpha) := \min \big\{ h_{z,v,M}(\alpha) , h_{z,-v,M}(\alpha) \big\} \,. $$
This is to forget the information about separating a point forward or backward along the direction $v$.

\section{Adaptive refinement using the optimal mesh function} \label{sec:refine} 

We can apply the function $\widetilde{h_{z,v}}(\alpha)$ to refine a mesh at a point $z$ along a direction $v$ (which we may refer to as a polarization direction). 
Suppose we have a finite number of inhomogeneous inclusions  inside $\Omega$, and we implement the following procedures to recover all  the inhomogeneities:
\begin{enumerate}
\item[(i)] Select a coarse mesh with mesh size $h = \lambda/2 $; 
\item[(ii)] Locate where the reconstruction has an elongated shape at a sampling point $z$, say, a rod with direction $v$;
\item[(iii)] Refine the mesh $\widetilde{h_{z,v}}(\alpha)$ at the point $z$ and the polarization direction $v$ for further reconstruction;  
\item[(iv)] Repeat step (iii)  until all inclusions are separated. 
\end{enumerate}
It is worth emphasizing that the purpose of steps (ii) and (iii) is to determine whether the inhomogeneities are actually two objects or only one long rod.

\section{Numerical illustrations} \label{sec:numerics}

\subsection{Behavior of the mesh-size function $\widetilde{h_{z,v}}(\alpha)$}

In order to understand better the behavior of the optimal mesh-size function, let us numerically illustrate some of the behaviors of $\widetilde{h_{z,v}}(\alpha)$ defined as in \eqref{finalfunction} (see Theorem \ref{important_theorem}).
Assume that the sources $x_s$ and receivers $x_r$ are located at $\Gamma= \{ ( \cos( 4 \pi n/14) , 4 \sin( 2 \pi n/14)  )\} $ for $n=0,1,\cdots, 7$, as shown in Figure \ref{fig:gamma}(a), and let $\Omega = B_4(0)$.

\begin{figurehere}
 %\vskip -0.2truecm
 \hfill{}\includegraphics[clip,width=0.4\textwidth]{./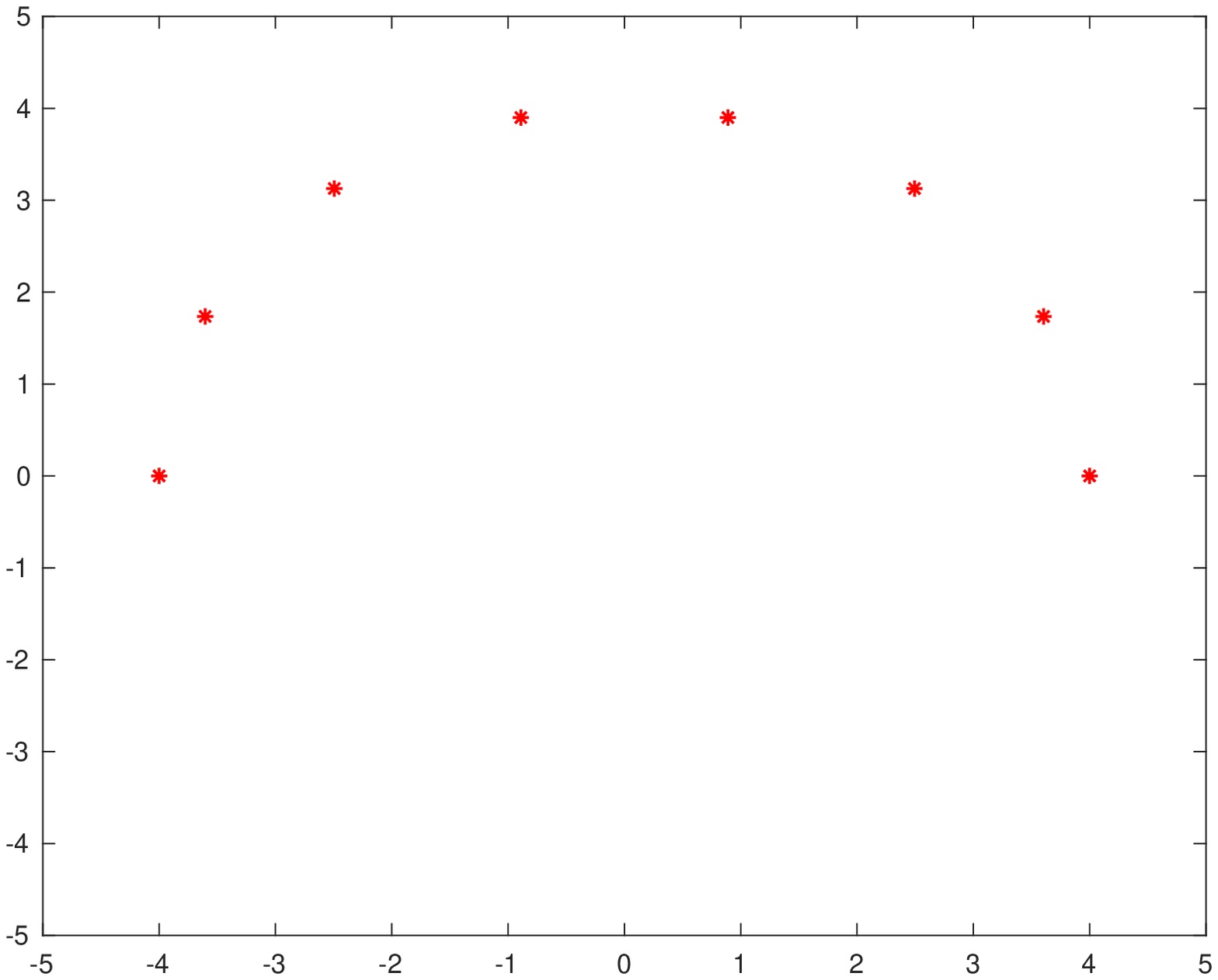}\hfill{}
\hfill{}\includegraphics[clip,width=0.4\textwidth]{./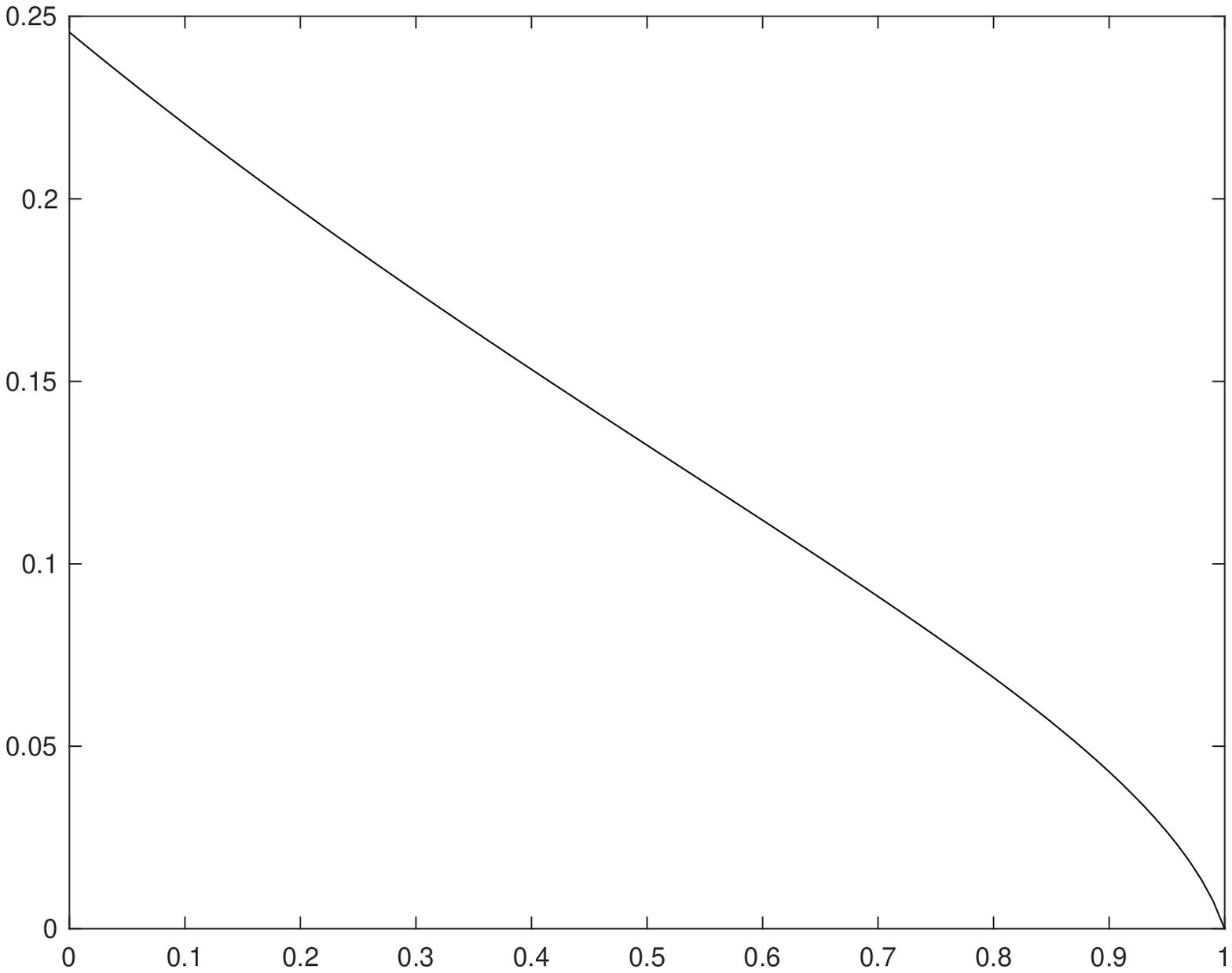}\hfill{}

  \hfill{}(a)\hfill{} \hfill{}(b)\hfill{}
  
 % \vskip -0.5truecm
 \caption{\label{fig:gamma}  \small{\emph{(a) $\Gamma$ in red. (b) The graph of $h_{z,v}(\alpha)$ with respect to $\alpha$ when $k = 2$, $z = (1,3.5)$ and the polarization direction $v = (1,0)$.  Measurement points in red and grid points in blue. }}}
 \end{figurehere}

We assume that $k = 2$, $z = (1,3.5)$, and the polarization direction $v = (1,0)$. The mesh-size function $h_{z,v}(\alpha)$ with respect to $\alpha$ is illustrated in Figure \ref{fig:gamma}(b).
We may also fix a large $\alpha$ to obtain a density function and uneven mesh with a mesh-size satisfying the density provided by $$ z \mapsto p_{v,\alpha} (z) :=  \frac{ \big(\widetilde{h_{z,v}}(\alpha)\big)^{-2} } { \int_{\Omega}   \big(\widetilde{h_{z,v}}(\alpha) \big)^{-2} dz}$$ for a fixed $v$.
The reason to take this function is that the optimal density is proportional to $ \big( \widetilde{h_{z,v}}(\alpha) \big)^{-M}$ where $M = 2$. Figure \ref{fig:meshsize2} shows the density function $ p_{v,\alpha} (z) $ at $v = (1,0)$ and $v=(0,1)$, respectively,  when $\alpha = 0.995$.

\begin{figurehere}
 %\vskip -0.2truecm
 \hfill{}\includegraphics[clip,width=0.45\textwidth]{./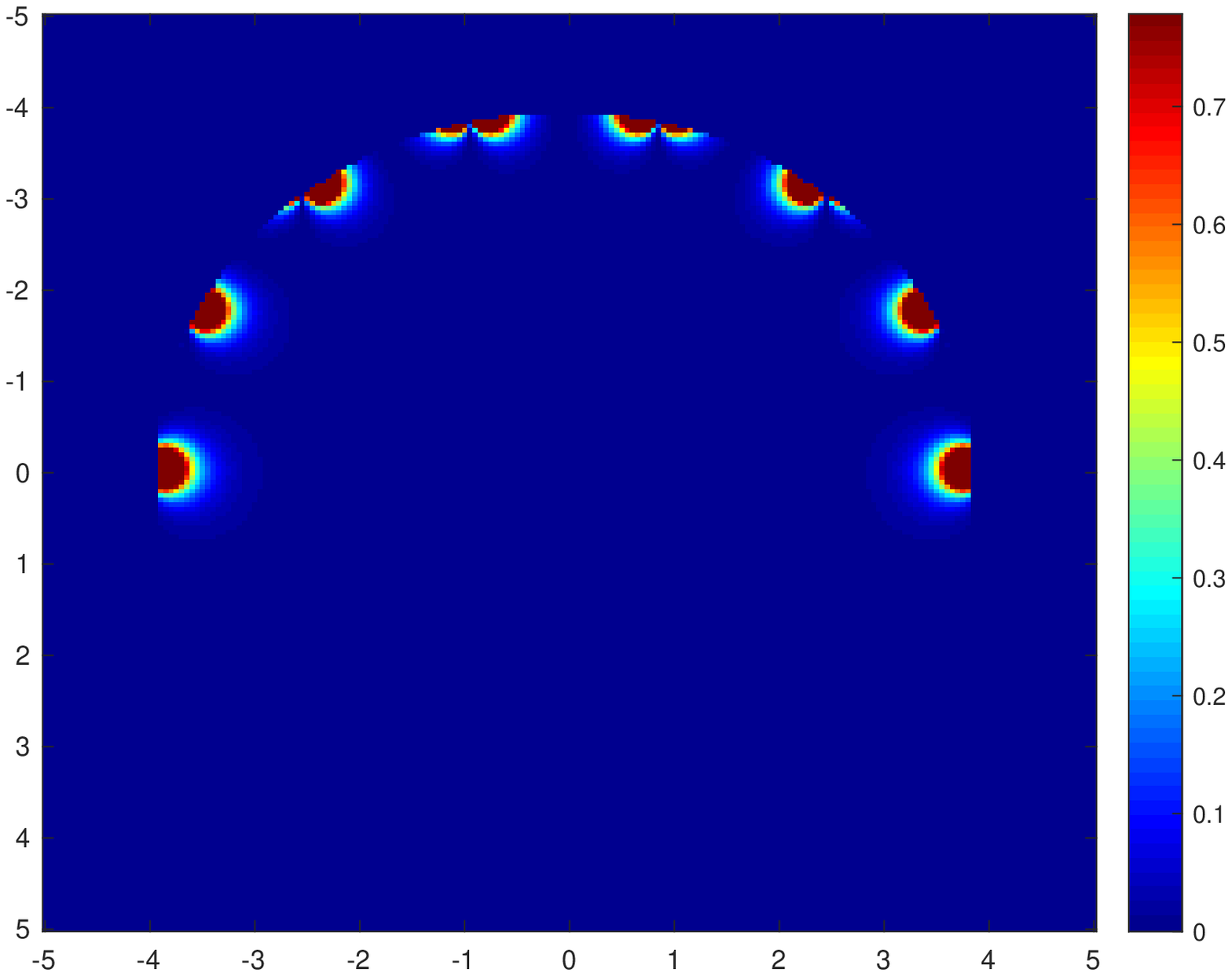}\hfill{}
%\hskip -0.5truecm
 \hfill{}\includegraphics[clip,width=0.45\textwidth]{./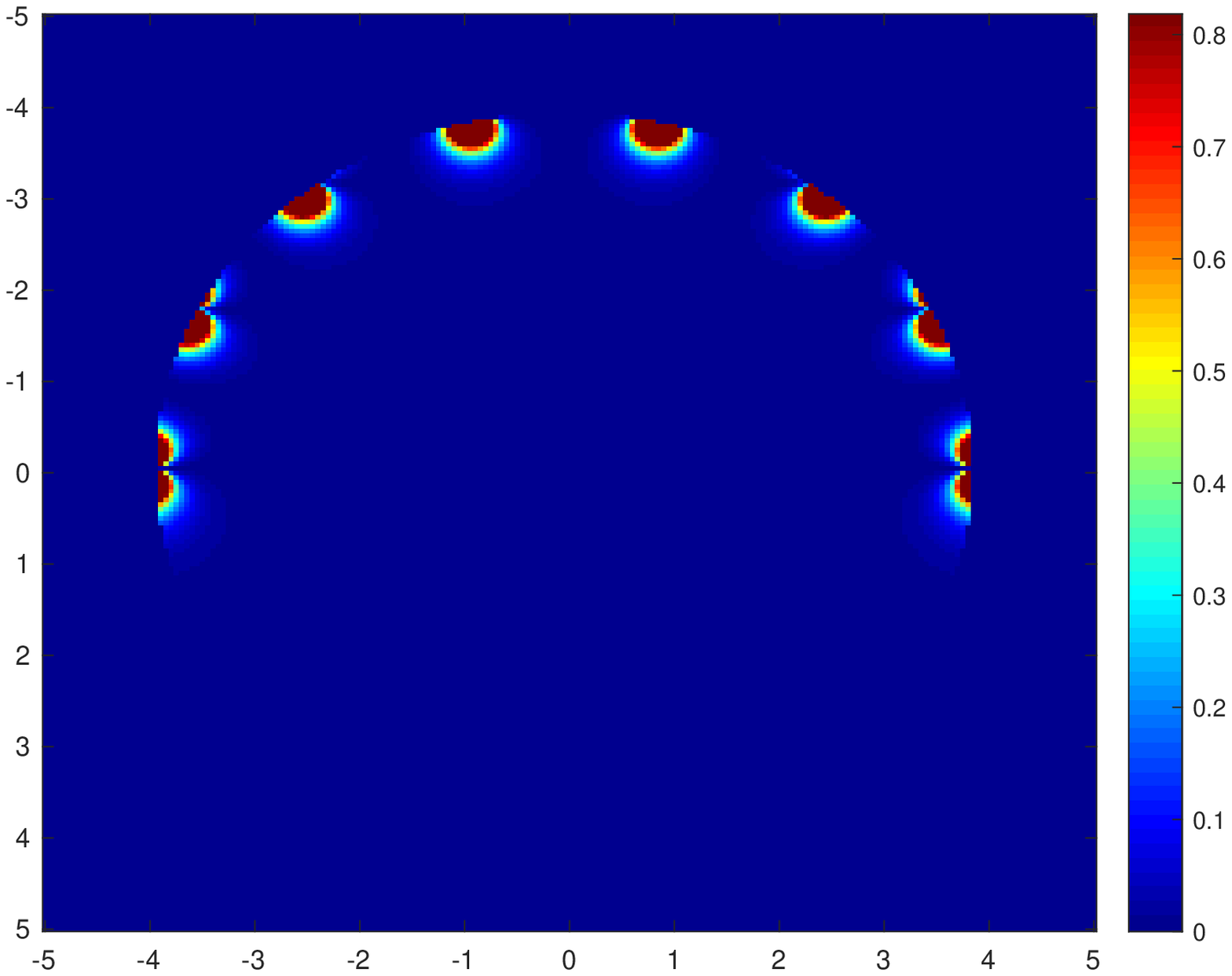}\hfill{}
 \vskip -0.5truecm
 \caption{\label{fig:meshsize2}  \small{\emph{The function $ p_{v,\alpha} (z) $ at $v = (1,0)$ and $v=(0,1)$, respectively, when $\alpha = 0.99$. Measurement points in red and grid points in blue.  }}}
 \end{figurehere}

The above plots show that the refinement is unnecessary when the sampling points are at a distance half the radius from  the sources/receivers.
Now, we fix a particular $\alpha$ and generate a set of grid points with a mesh-size satisfying $h_{z,v}$ along the direction $v$.  
Figure \ref{fig:meshsize3} illustrates the grid points with $v = (1,0)$ and $v=(0,1)$,  respectively,  when $\alpha = 0.55$.

\begin{figurehere}
 %\vskip -0.2truecm
 \hfill{}\includegraphics[clip,width=0.45\textwidth]{./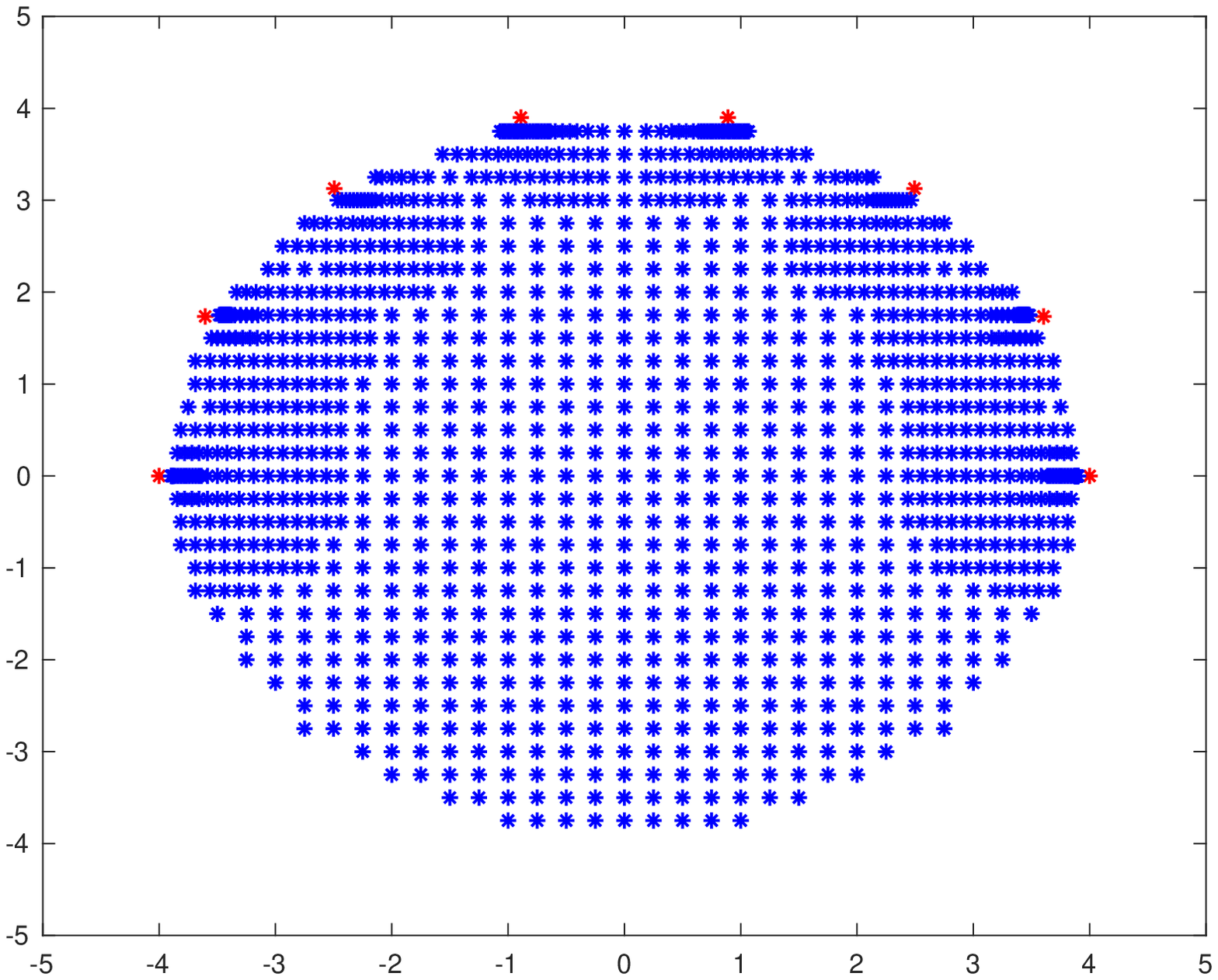}\hfill{}
%\hskip -0.5truecm
\hfill{}\includegraphics[clip,width=0.45\textwidth]{./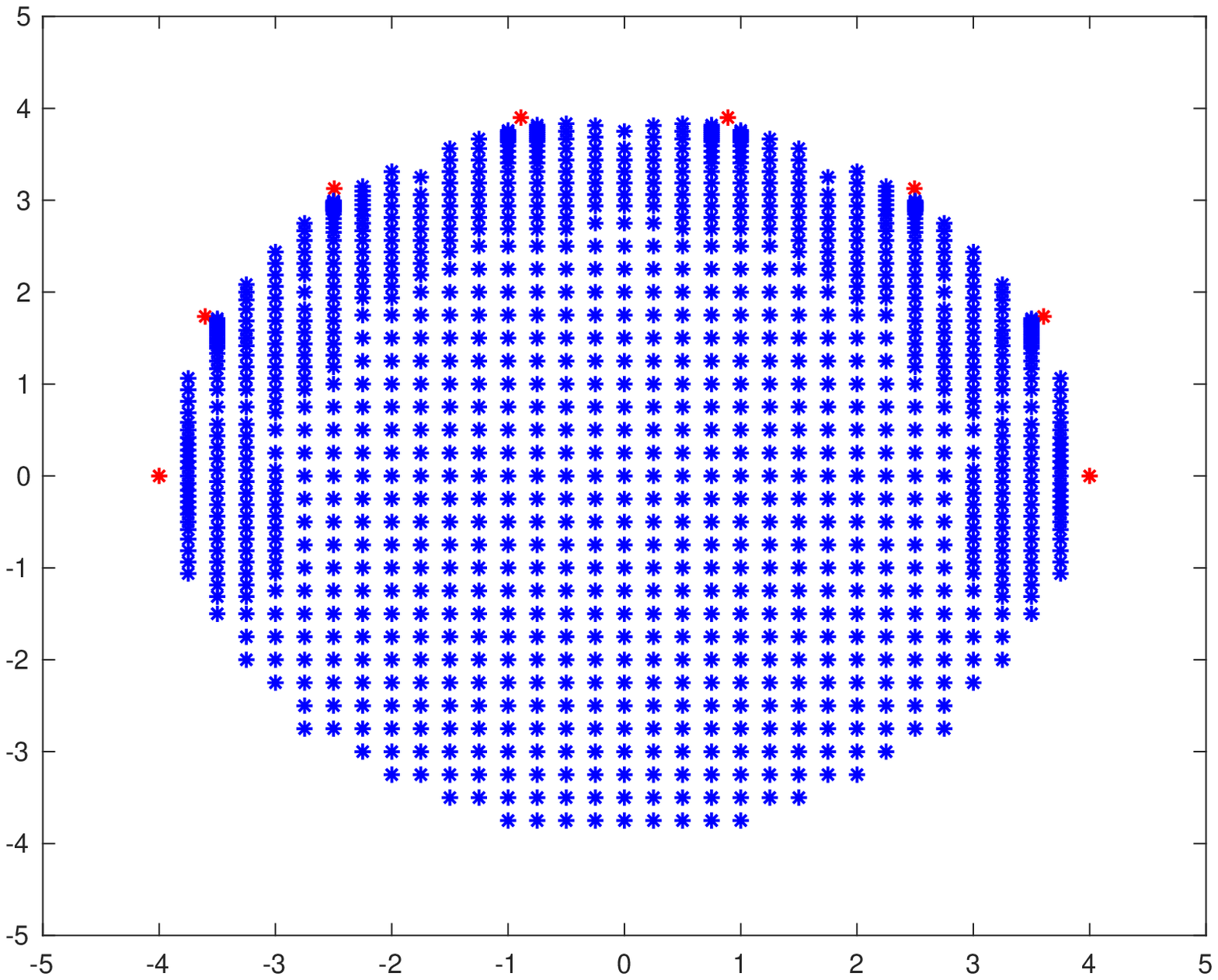}\hfill{}
 \vskip -0.5truecm
 \caption{\label{fig:meshsize3}  \small{\emph{Grid points with mesh-size satisfying $h_{z,v}(0.1)$ along the direction $v$, where $v = (1,0)$ and $v=(0,1)$, respectively. }}}
 \end{figurehere}
 
One may observe that the refinements are done in the directions $v$ near the source/receiver points and are left coarser when the sampling points are further away.
In practice, we may combine the two mesh sizes and generate a tensor mesh with a mesh-size $\widetilde{h_{z,e_1}}(\alpha) $ along  $e_1 = (1,0)$ and $\widetilde{h_{z,e_2}}(\alpha) $ along $e_2 = (0,1)$.
Figure \ref{fig:meshsize4} respectively shows the grid points of the tensor mesh with  $\widetilde{h_{z,e_1}}(\alpha) $ along  $e_1 = (1,0)$ and $\widetilde{h_{z,e_2}}(\alpha) $ along $e_2 = (0,1)$ when $\alpha = 0.55$.

\begin{figurehere}
 %\vskip -0.2truecm
 \hfill{}\includegraphics[clip,width=0.45\textwidth]{./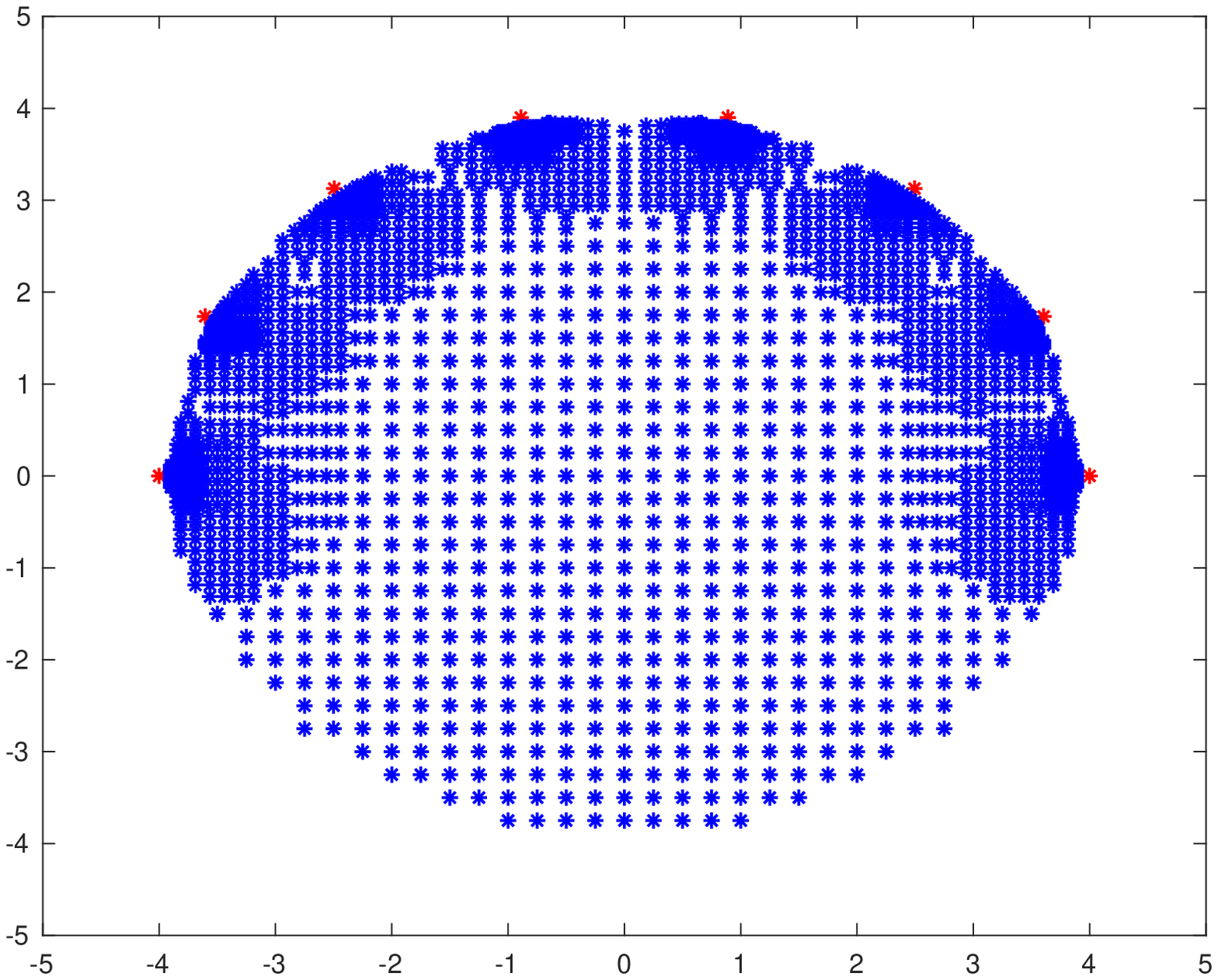}\hfill{}
%\hskip -0.5truecm
\hfill{}\includegraphics[clip,width=0.45\textwidth]{./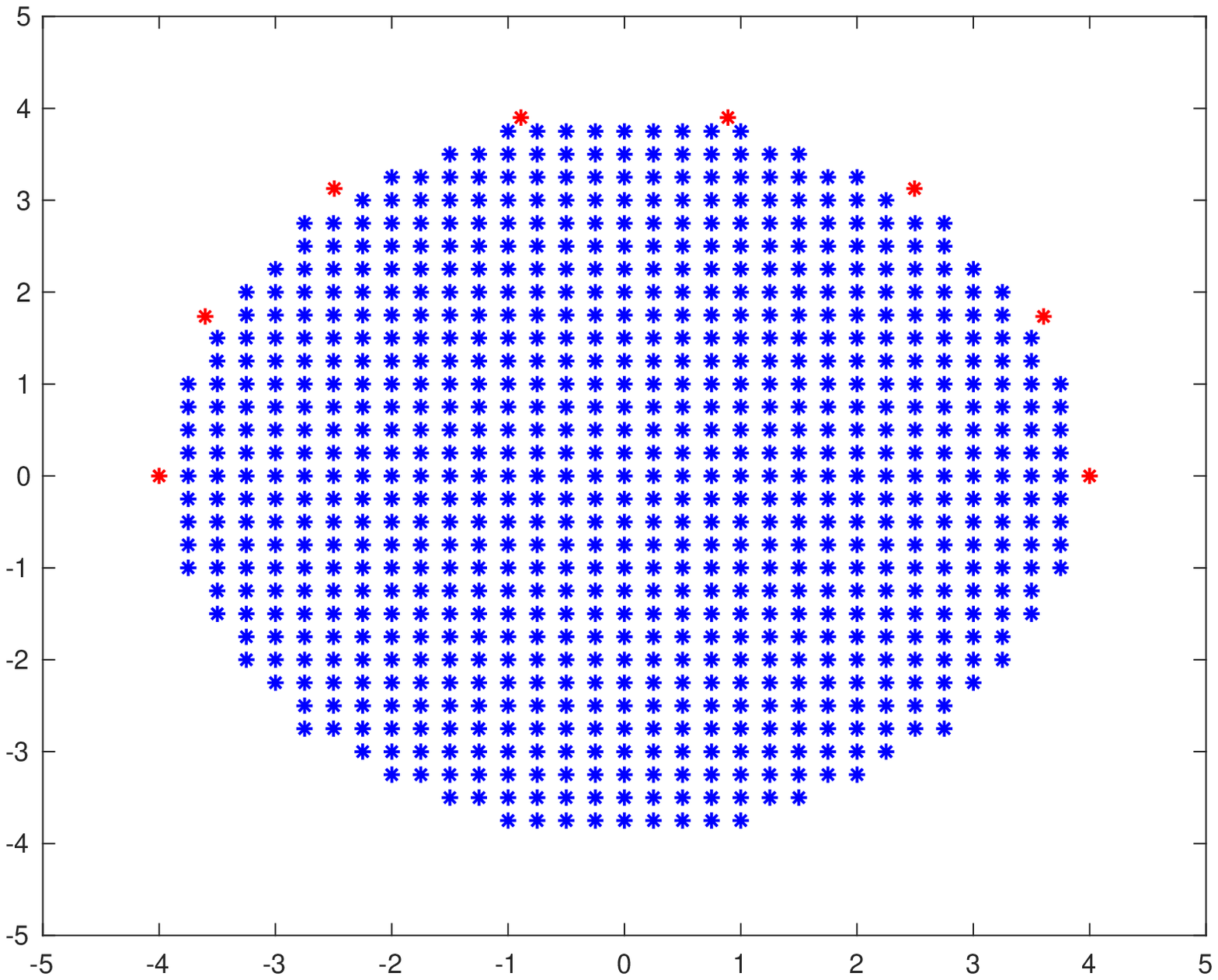}\hfill{}

  \hfill{}(a)\hfill{} \hfill{}(b)\hfill{}
% \vskip -0.5truecm
 \caption{\label{fig:meshsize4}  \small{\emph{(a) Grid points with mesh-size satisfying $h_{z,v}(0.1)$ along the direction $v$, where $v = (1,0)$ and $v=(0,1)$, respectively. (b) Unrefined mesh.  Measurement points in red and grid points in blue. }}}
 \end{figurehere}

\subsection{Use of the mesh-size function $\widetilde{h_{z,v}}(\alpha)$ for improving the reconstruction methods}

This paper aims at getting an optimal mesh-size which maximizes the resolution of the reconstruction procedure with minimal numerical cost.  As discussed before, this choice of mesh is not restricted to one particular numerical method, and the numerical methods given in subsection \ref{example_haha} are just several examples to illustrate the concept.   In this subsection, we illustrate  the efficiency of the optimal meshing procedure using the DSM, which is introduced in subsection \ref{example_haha}.  For further details on the DSM, we refer to \cite{L1, L2, LZ,CILZ, L2, LXZ2, LXZ3, DSM11, DSM12, DSM13, DSM14, DSM15}.

In what follows, we consider an example of a near-field measurement corresponding to only one illumination.  Notice that since the case considered here is more pathological than the assumptions given in Theorem \ref{important_theorem}, the mesh-size function $\widetilde{h_{z,v}}(\alpha)$ defined as in \eqref{finalfunction} shall serve as a bottom line for the reconstruction. 

We consider the case  $k = \pi^2 $ when $q(x)$ in \eqref{scattering1} is given by
\beqnx
q(x) = 1 + 2 \mathcal{I}_{Q_1}(x) + 2 \mathcal{I}_{Q_2}(x),
\eqnx
where $Q_1 = \{ x \in \mathbb{R}^2 ; -0.275<x_1<-0.175, -0.275<x_2<-0.175  \} $ and
$Q_2 = \{ x \in \mathbb{R}^2 ; -0.15<x_1<-0.05, -0.15<x_2<-0.05  \} $
are two squares  inside $\mathbb{R}^2$, and $ \mathcal{I}_A$ is an indicator function of a set $A$ such that the value is $1$ if $x\in A$ and is $0$ otherwise.  We take a plane wave incidence with angle $\theta_y = \pi/4$, i.e., $d_y= (\cos(\theta_y), \sin(\theta_y))$.  As shown in the following figures, 
$20$ measurement points along a circle of radius $0.5$ are employed in this example.
We perform the DSM with different mesh sizes to illustrate the efficiency of using $\widetilde{h_{z,v}}(\alpha)$ for meshing.
Note that we intentionally define $Q_1$ and $Q_2$ closer to the boundary measurement points, otherwise the density given by $\widetilde{h_{z,v}}(\alpha)$ will be similar to the situation when we have far-field measurements, and that will not serve the purpose of illustrating the efficiency of $\widetilde{h_{x,v}}(\alpha)$.  

Moreover, we also intentionally put the distance between $Q_1$ and $Q_2$ smaller than half of the wavelength which is approximately $0.16$.  This helps to test if the meshing given by $\widetilde{h_{x,v}}(\alpha)$ can separate the two obstacles.
One remark is that, since we are taking near-field measurements, the DSM would have its kernal possessing ``heavy tail", (see, for instance, \cite{L} for more details), and hence the reconstruction will not be quite sharp and are expected to be more fussy.  We do not anticipate clear reconstructions since only one single measurement is applied in the recovery procedure.
This section, however, illustrates that, taking a mesh-size given by $\widetilde{h_{x,v}}(\alpha)$ helps to obtain as much details as those we can recover from our measurements.

\begin{figurehere}
 %\vskip -0.2truecm
 \hfill{}\includegraphics[clip,width=0.45\textwidth]{./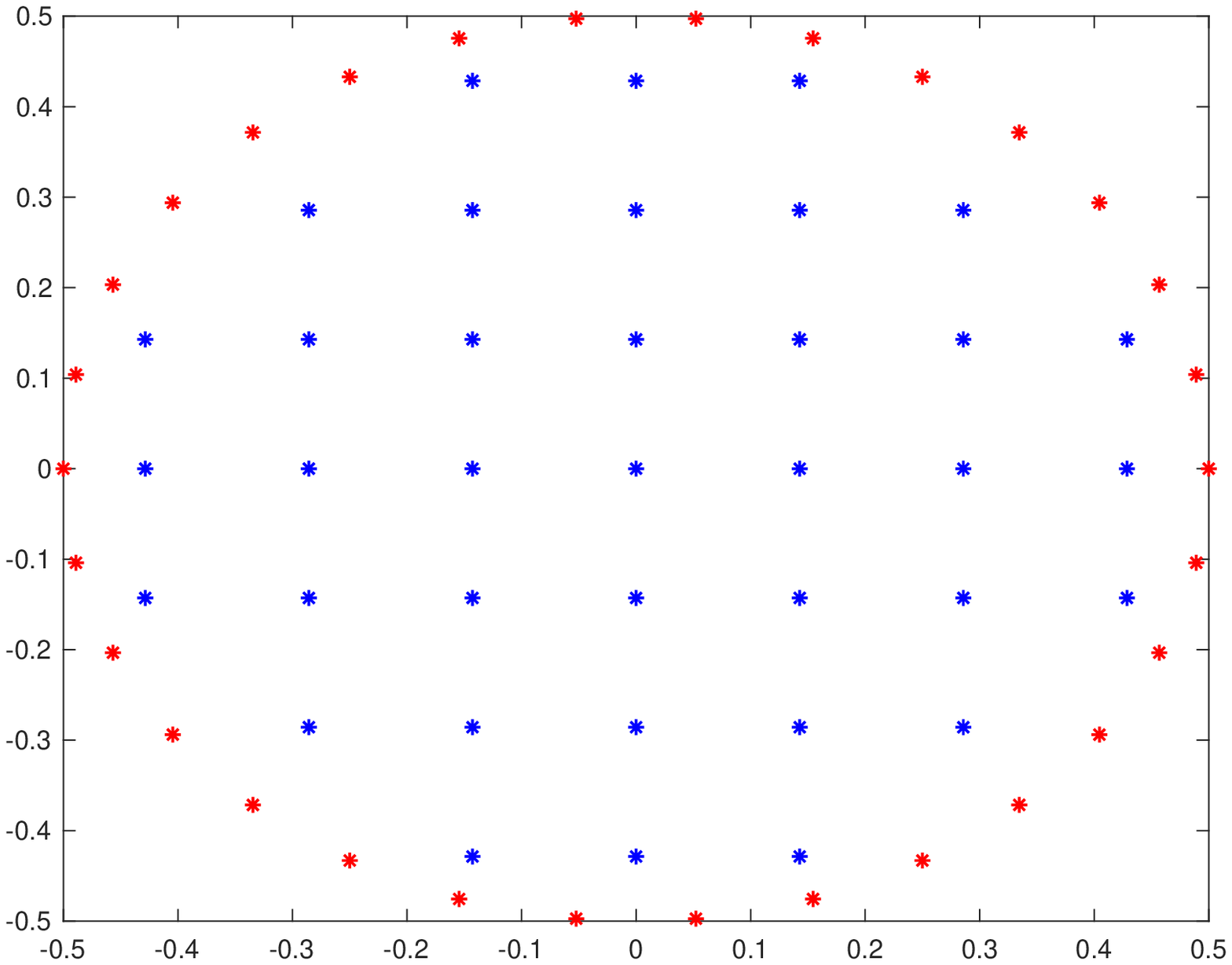}\hfill{}
%\hskip -0.5truecm
\hfill{}\includegraphics[clip,width=0.45\textwidth]{./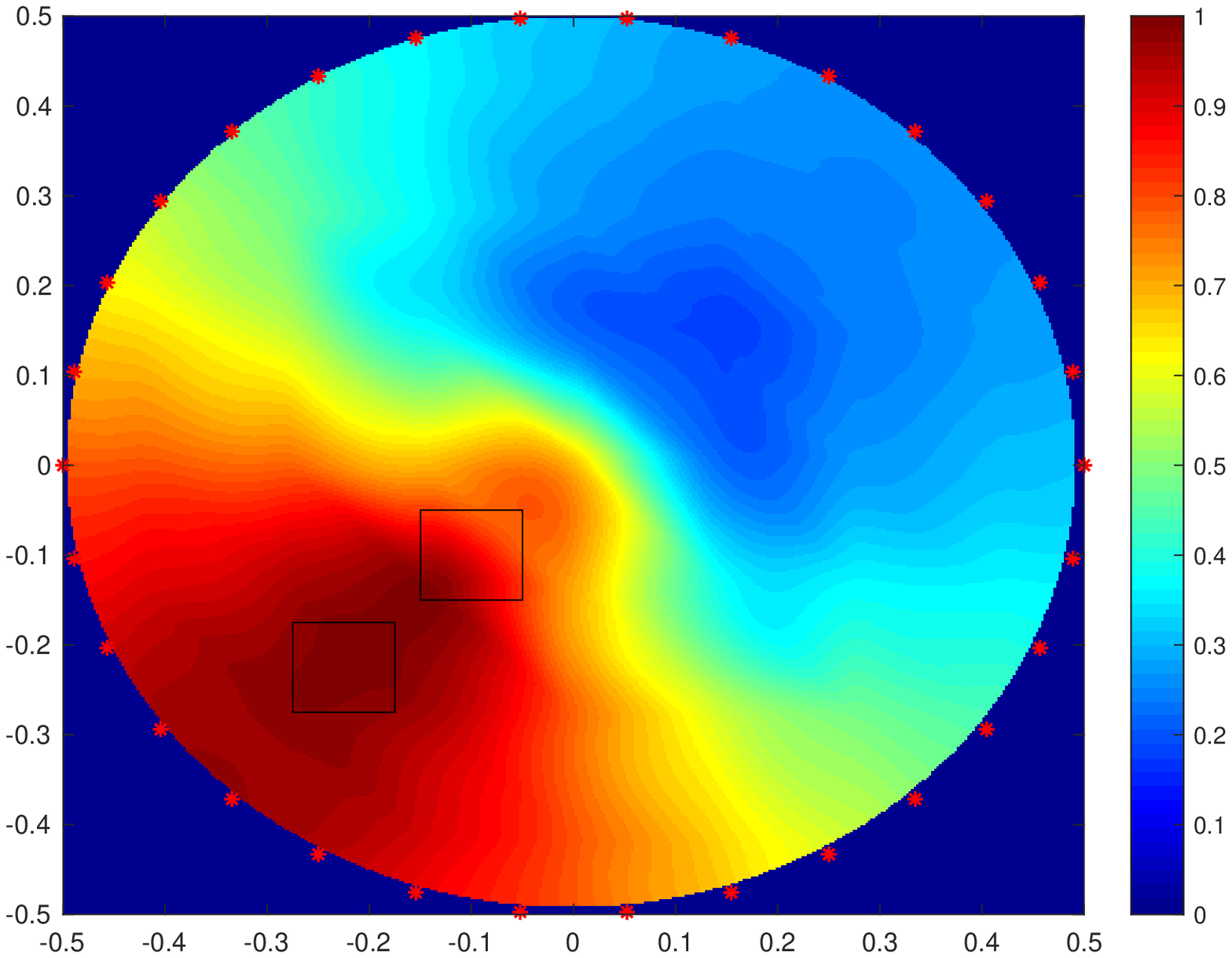}\hfill{}

 \hfill{}(a)\hfill{} \hfill{}(b)\hfill{}
 \vskip -0.2truecm
 \caption{\label{fig:reconstruction}  \small{\emph{(a) Grid points on a coarse mesh.  (b) reconstruction given by $ [I(x,dy) ]^2$, where $I_{\text{coarse}} := I(x,dy)$ in \eqref{eqn:indexfcn} with the grid point given by the coarse mesh. }}}
 \end{figurehere}

Figure \ref{fig:reconstruction} (a) illustrates the coarse mesh that we considered, with its mesh-size being half the wavelength.  
 Figure \ref{fig:reconstruction} (b) shows the reconstruction given by the index function $I_{\text{coarse}} := I(x,dy)$ with this coarse mesh.  As we may expect, we cannot distinguish two inclusions in the image, and basically we only observe a large patch at the lower bottom part of the reconstruction.

Now, let us consider a refinement of the mesh as a tensor grid with mesh-sizes $\widetilde{h_{z,e_i}}(\alpha)$ along the two directions $e_1=(1,0)$ and $e_2=(0,1)$, where we take $\alpha = 0.9$.  We refine the mesh in the whole sampling domain $\Omega$, and obtain a fine mesh given as in Figure \ref{fig:reconstruction2} (a).  We then perform the DMS and Figure \ref{fig:reconstruction2} (b) is the approximation provided by the index function $I(x,dy)$ with this fine sampling.  We notice that this sampling helps to identify whether or not there are indeed two inclusions, with a clear dip separating the two inclusions.  However, we do not expect a very fine reconstruction since the problem is severely ill-posed.  Nonetheless, we are able to separate the two inclusions in the image.

\begin{figurehere}
 %\vskip -0.2truecm
 \hfill{}\includegraphics[clip,width=0.45\textwidth]{./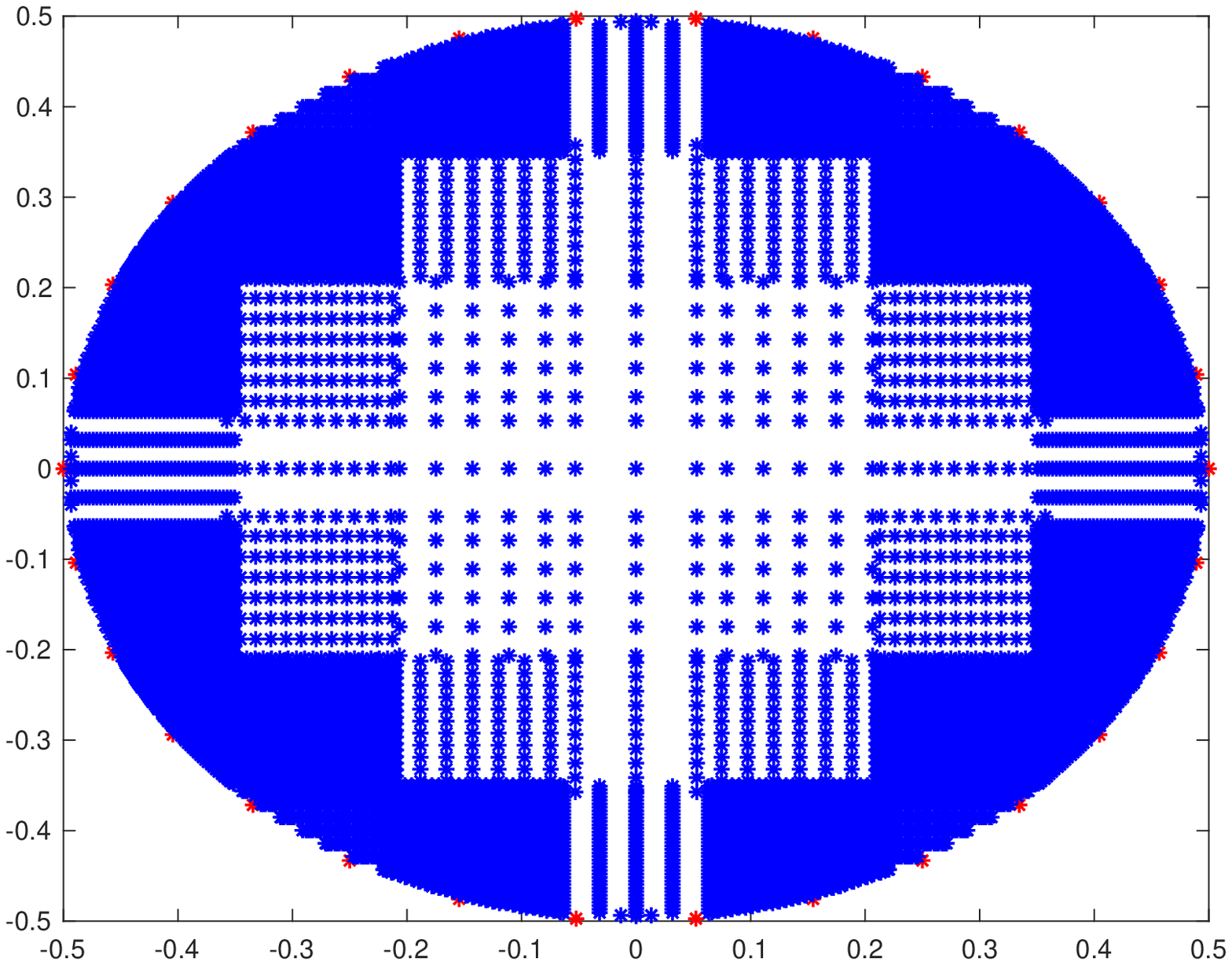}\hfill{}
%\hskip -0.5truecm
\hfill{}\includegraphics[clip,width=0.45\textwidth]{./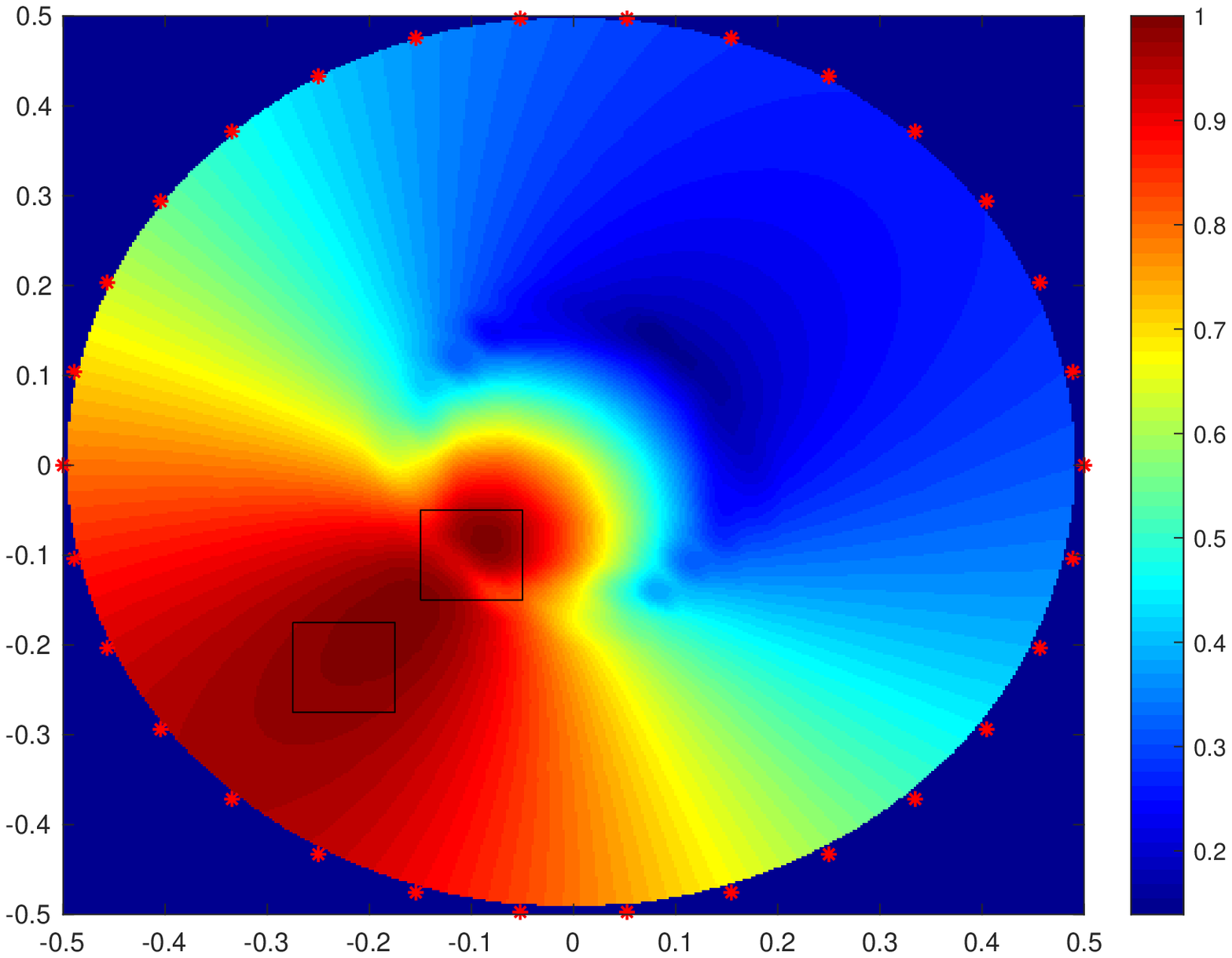}\hfill{}

 \hfill{}(a)\hfill{} \hfill{}(b)\hfill{}
 \vskip -0.2truecm
 \caption{\label{fig:reconstruction2}  \small{\emph{(a) Grid points on a refined mesh given by a tensor grid with mesh-sizes $\widetilde{h_{x,e_i}}(\alpha)$ along $e_1=(1,0), e_2=(0,1)$, $\alpha = 0.9$, and $x$ is taken in the whole sampling domain $\Omega$.  (b) A reconstruction given by $ [I(x,dy) ]^2$, where $I(x,dy)$ in \eqref{eqn:indexfcn} with a refined grid in the left. }}}
 \end{figurehere}

Next, we consider a refinement of the mesh only at the points where the value of $ I_{\text{coarse}} (x) > 0.995 $.
Again, we refine as a tensor grid with mesh-sizes $\widetilde{h_{z,e_i}}(\alpha)$ along the two directions $e_1=(1,0)$ and $e_2=(0,1)$, where we take $\alpha = 0.9$.  The resulting adaptive refinement is shown in Figure \ref{fig:reconstruction3} (a).  We perform the DSM. Figure \ref{fig:reconstruction3} (b) illustrates the recovery obtained by the index function $I(x,dy)$ with this adaptive fine sampling.  Comparing the reconstruction in Figures \ref{fig:reconstruction2} and \ref{fig:reconstruction3}, we can clearly observe  that the qualities of the reconstructions  are similar, i.e., both of them separate the two inclusions, but adaptive refinement requires a significantly smaller number of sampling points in the reconstruction.

\begin{figurehere}
 %\vskip -0.2truecm
 \hfill{}\includegraphics[clip,width=0.45\textwidth]{./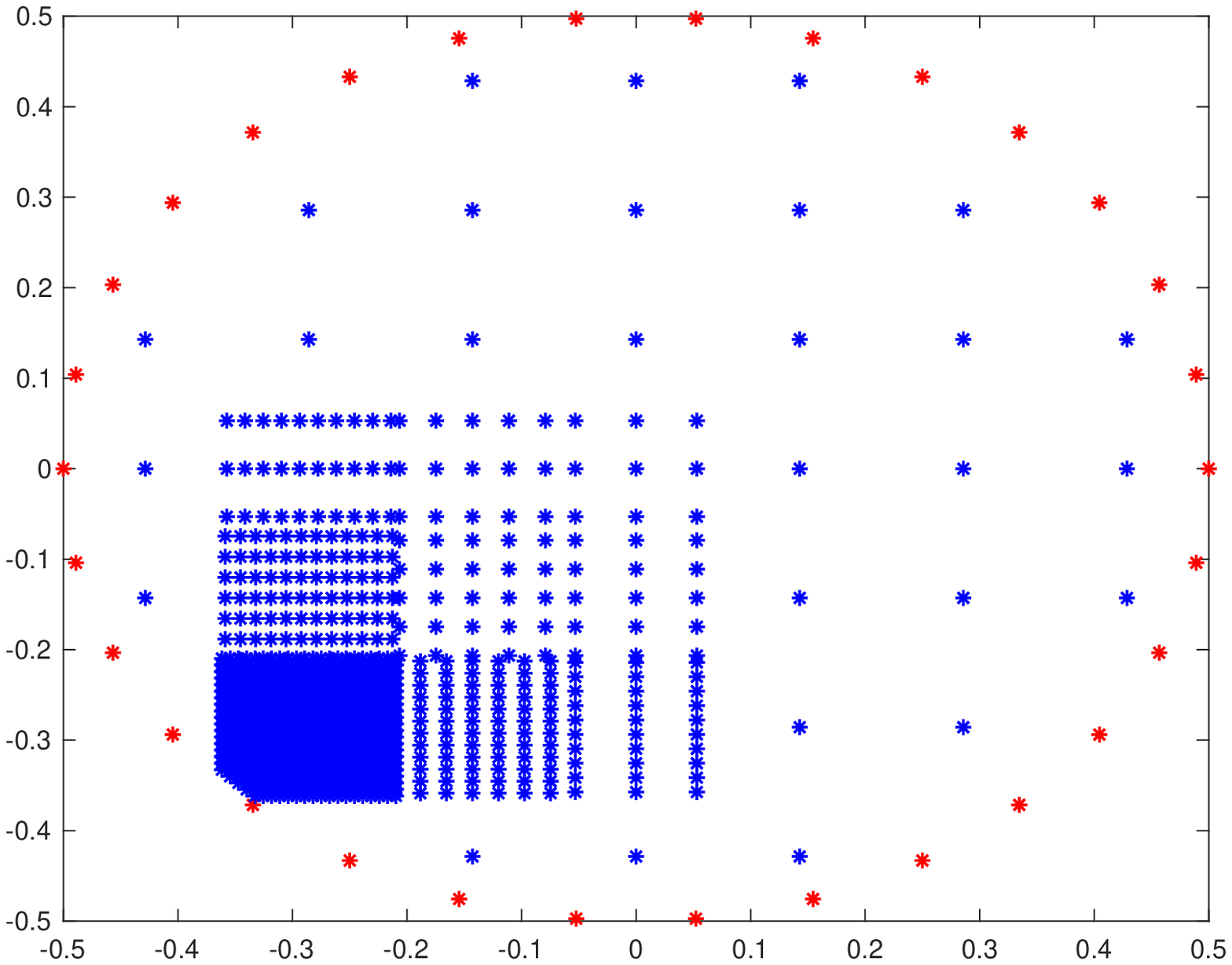}\hfill{}
%\hskip -0.5truecm
\hfill{}\includegraphics[clip,width=0.45\textwidth]{./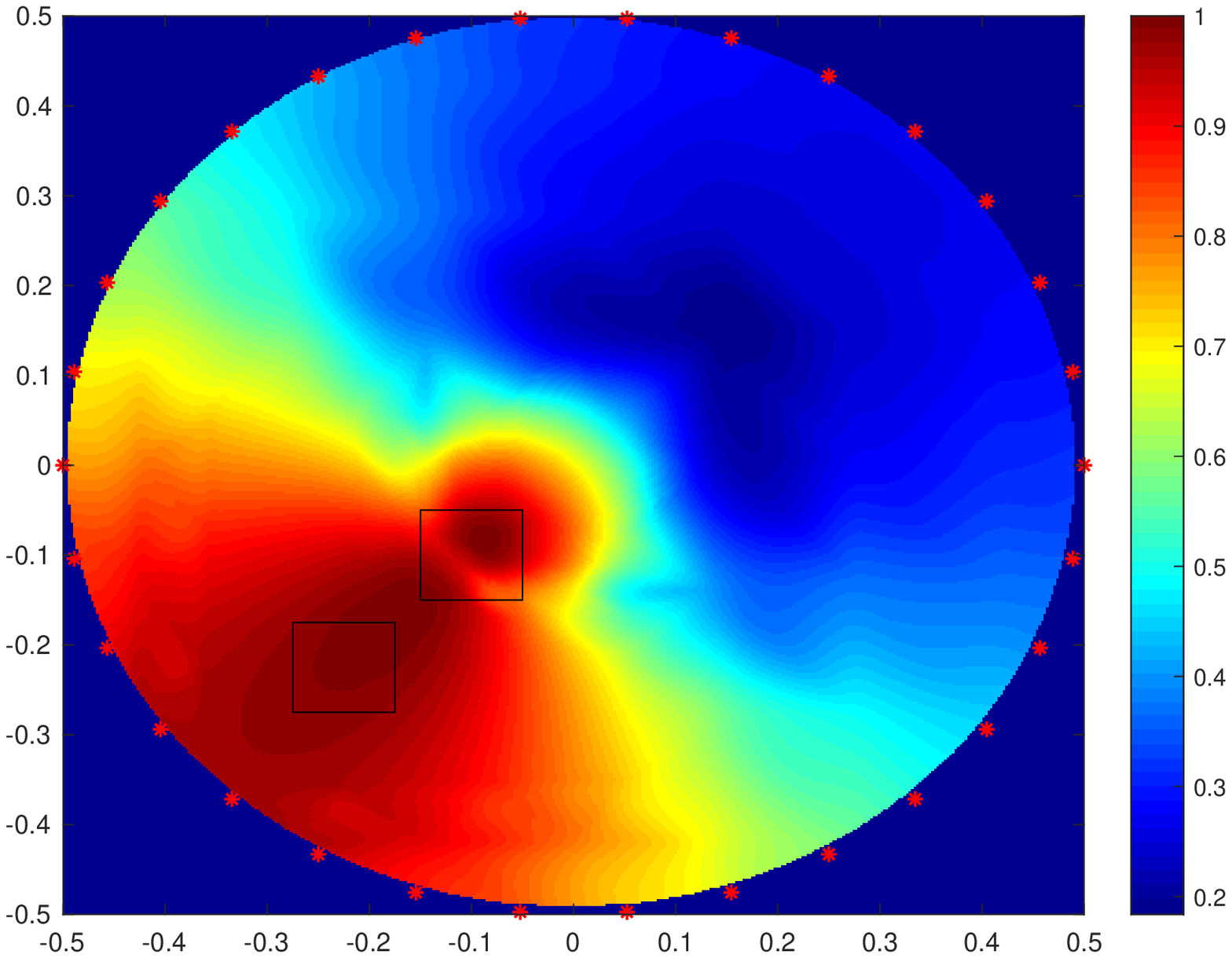}\hfill{}

 \hfill{}(a)\hfill{} \hfill{}(b)\hfill{}
 \vskip -0.2truecm
 \caption{\label{fig:reconstruction3}  \small{\emph{(a) Grid points on adaptively-refined mesh given by a tensor grid with mesh-sizes $\widetilde{h_{x,e_i}}(\alpha)$ along $e_1=(1,0), e_2=(0,1)$, $\alpha = 0.9$, and $x$ is taken only when $I_{\text{coarse}}(x) > 0.995$.  (b) A reconstruction given by $ [I(x,dy) ]^2$, where $I(x,dy)$ in \eqref{eqn:indexfcn} with adaptively-refined grid on the left. }}}
 \end{figurehere}

\section{Conclusion} \label{con}

In this paper, we have discussed the choice of optimal mesh size for the reconstruction of inclusions from both the near-field and the far-field data in two and three dimensions.  The computation complexity would be sharply reduced with an optimal mesh size for state-of-the-art algorithms (i.e., LSM, DSM, CSIM, etc,) since the computation complexity is usually at least of the order of $\mathcal{O}(N^{M-1})$ in $\mathbb{R}^M$, where $N$ is the number of sampling points along one direction.  Moreover, an optimized mesh size can help improving reconstructions of some iterative reconstruction approaches (i.e., MSM, EMSM, etc,) 
by avoiding choices of stopping criterion which may heavily depend on the quality of the reconstruction under a coarser mesh and a subjective choice of the cut-off value.   
We can conclude that our results are expected to have important implications in solving inverse scattering problems.

\section{Acoknowledgement}
The work of Yat Tin Chow was supported by the ONR grant N000141712162 and the NSF grant DMS-1720237. 
The work of Keji Liu was supported by the NNSF of China under grants Nos. 11601308 and 11771349.

\end{document}